\documentclass[a4paper]{amsart}
\usepackage{verbatim}
\usepackage[dvips]{color}
\usepackage{amssymb}

\usepackage{lineno}

\usepackage[active]{srcltx}
\usepackage{latexsym}
\usepackage{color}
\usepackage{amsmath}
 \usepackage{float}
\usepackage{mathrsfs} 

\allowdisplaybreaks

\newtheorem{theorem}{Theorem}[section]

\newtheorem{lemma}{Lemma}[section]

\theoremstyle{definition}
\newtheorem{definition}{Definition}
\newtheorem{remark}{Remark}[section]

\floatstyle{ruled} \restylefloat{figure} \restylefloat{table}

\numberwithin{equation}{section}

\newcommand{\rn}[1]{\mathbf{R}^{#1}}

\newcommand{\beq}{\begin{equation}}
\newcommand{\bea}[1]{\begin{array}{#1} }
\newcommand{\eeq}{ \end{equation}}
\newcommand{\ea}{ \end{array}}
\newcommand{\ep}{\varepsilon}

\renewcommand{\k}{\kappa}

\newcommand{\ran}{\rangle}
\newcommand{\lan}{\langle}

\def \rn  {{\mathbb {R}}^{N}}
\def \R  {{\mathbb {R}}}

\def \p {{\partial}}


\def\mean#1{\mathchoice%
          {\mathop{\kern 0.2em\vrule width 0.6em height 0.69678ex depth -0.58065ex
                  \kern -0.8em \intop}\nolimits_{\kern -0.4em#1}}%
          {\mathop{\kern 0.1em\vrule width 0.5em height 0.69678ex depth -0.60387ex
                  \kern -0.6em \intop}\nolimits_{#1}}%
          {\mathop{\kern 0.1em\vrule width 0.5em height 0.69678ex
              depth -0.60387ex
                  \kern -0.6em \intop}\nolimits_{#1}}%
          {\mathop{\kern 0.1em\vrule width 0.5em height 0.69678ex depth -0.60387ex
                  \kern -0.6em \intop}\nolimits_{#1}}}

\def\vintslides_#1{\mathchoice%
          {\mathop{\kern 0.1em\vrule width 0.5em height 0.697ex depth -0.581ex
                  \kern -0.6em \intop}\nolimits_{\kern -0.4em#1}}%
          {\mathop{\kern 0.1em\vrule width 0.3em height 0.697ex depth -0.604ex
                  \kern -0.4em \intop}\nolimits_{#1}}%
          {\mathop{\kern 0.1em\vrule width 0.3em height 0.697ex depth -0.604ex
                  \kern -0.4em \intop}\nolimits_{#1}}%
          {\mathop{\kern 0.1em\vrule width 0.3em height 0.697ex depth -0.604ex
                  \kern -0.4em \intop}\nolimits_{#1}}}

\newcommand{\aveint}[2]{\mathchoice%
          {\mathop{\kern 0.2em\vrule width 0.6em height 0.69678ex depth -0.58065ex
                  \kern -0.8em \intop}\nolimits_{\kern -0.45em#1}^{#2}}%
          {\mathop{\kern 0.1em\vrule width 0.5em height 0.69678ex depth -0.60387ex
                  \kern -0.6em \intop}\nolimits_{#1}^{#2}}%
          {\mathop{\kern 0.1em\vrule width 0.5em height 0.69678ex depth -0.60387ex
                  \kern -0.6em \intop}\nolimits_{#1}^{#2}}%
          {\mathop{\kern 0.1em\vrule width 0.5em height 0.69678ex depth -0.60387ex
                  \kern -0.6em \intop}\nolimits_{#1}^{#2}}}

  \newcommand{\e}{\varepsilon}

\def\eqn#1$$#2$${\begin{equation}\label#1#2\end{equation}}
\def\charfn_#1{{\raise1.2pt\hbox{$\chi
_{\kern-1pt\lower3pt\hbox{{$\scriptstyle#1$}}}$}}}

\def\qq1{q_*}
\def\q2{q_{**}}

\def\ep{\varepsilon}

\def\rn{\mathbb R^N}

\newdimen\vintbar
\vintbar12pt
\def\vint{-\kern-\vintbar\int}

\def\0{\boldsymbol 0}



\newtoks\by
\newtoks\paper
\newtoks\book
\newtoks\jour
\newtoks\yr
\newtoks\pages
\newtoks\vol
\newtoks\publ

\def\name[#1, #2]{#1 #2}
\def\ota{{\hbox{\bf ???}}}
\def\cLear{\by=\ota\paper=\ota\book=\ota\jour=\ota\yr=\ota
\pages=\ota\vol=\ota\publ=\ota}
\def\endpaper{\the\by, \textit{\the\paper},
{\the\jour} \textbf{\the\vol} (\the\yr), \the\pages.\cLear}
\def\endbook{\the\by, \textit{\the\book},
\the\publ, \the\yr.\cLear}
\def\endpap{\the\by, \textit{\the\paper}, \the\jour.\cLear}
\def\endproc{\the\by, \textit{\the\paper}, \the\book, \the\publ,
\the\yr, \the\pages.\cLear}

\begin{document}


\title[Variational inequalities for non-local operators]{Systems of variational inequalities for non-local operators
related to optimal switching problems: existence and uniqueness}

\address{Niklas L. P. Lundstr\"{o}m \\Department of Mathematics, Uppsala University\\
S-751 06 Uppsala, Sweden}
\email{niklas.lundstrom@math.umu.se}
\address{Kaj Nystr\"{o}m\\Department of Mathematics, Uppsala University\\
S-751 06 Uppsala, Sweden}
\email{kaj.nystrom@math.uu.se}
\address{Marcus Olofsson\\Department of Mathematics, Uppsala University\\
S-751 06 Uppsala, Sweden}
\email{marcus.olofsson@math.uu.se}

\begin{abstract}
\medskip In this paper we study viscosity solutions to the system  \begin{eqnarray*}
&&\min\biggl\{-\mathcal{H}u_i(x,t)-\psi_i(x,t),u_i(x,t)-\max_{j\neq i}(-c_{i,j}(x,t)+u_j(x,t))\biggr\}=0,\notag\\
&&u_i(x,T)=g_i(x),\ i\in\{1,\dots,d\},
\end{eqnarray*}
where $(x,t)\in\mathbb R^{N}\times [0,T]$. Concerning $\mathcal{H}$ we assume that $\mathcal{H}=\mathcal{L}+\mathcal{I}$ where
$\mathcal{L}$ is a linear, possibly degenerate, parabolic operator of second order and $\mathcal{I}$ is a non-local integro-partial differential operator. A special case of this type of system of variational inequalities with terminal data occurs in the context of optimal switching problems when the
dynamics of the underlying state variables is described by an $N$-dimensional Levy process.  We establish a general comparison principle for viscosity sub- and supersolutions to the system under mild regularity, growth and
structural assumptions on the data, i.e., on the operator $\mathcal{H}$ and on continuous functions $\psi_i$, $c_{i,j}$, and
$g_i$.  Using the comparison principle we prove the existence of a unique viscosity solution $(u_1,\dots,u_d)$  to the system by Perron's method.
Our main contribution is that we establish existence and uniqueness of viscosity solutions,
in the setting of Levy processes and non-local operators,
with no sign assumption on the switching costs $\{c_{i,j}\}$ and allowing $c_{i,j}$ to depend on $x$ as well as $t$.

\noindent
2000  {\em Mathematics Subject Classification.}
\noindent

\medskip

\noindent
{\it Keywords and phrases: system, variational inequality, existence, viscosity solution, non-local operator, integro-partial differential operator, Levy process, jump-diffusion, optimal switching problem.}
\end{abstract}

\author{N. L. P. Lundstr\"{o}m, K. Nystr{\"o}m, M. Olofsson}\thanks{N.L.P. Lundstr\"{o}m and M. Olofsson were financed
by Jan Wallanders och Tom Hedelius Stiftelse samt Tore Browaldhs Stiftelse
through the project {\it Optimal switching problems and their applications in economics and finance}, P2010-0033:1.}


\maketitle

\newpage

\setcounter{equation}{0} \setcounter{theorem}{0} \setcounter{definition}{0}
\section{Introduction and statement of main results}
\noindent In this paper we consider the problem
\begin{eqnarray}\label{eq4+}
&&\min\biggl\{-\mathcal{H}u_i(x,t)-\psi_i(x,t),u_i(x,t)-\max_{j\neq i}(-c_{i,j}(x,t)+u_j(x,t))\biggr\}=0,\notag\\
&&u_i(x,T)=g_i(x),\ i\in\{1,\dots,d\},
\end{eqnarray}
in $\mathbb R^N\times [0,T]$ ($(x,t)\in\mathbb R^N\times [0,T]$), $T>0$,  where $\psi_i$, $c_{i,j}$, and $g_i$ are continuous functions and
$\mathcal{H}$ is a non-local integro-partial differential operator. Concerning $\mathcal{H}$ we assume that $\mathcal{H}=\mathcal{L}+\mathcal{I}$,
\begin{equation}\label{opera}
    \mathcal{L}=\sum_{i,j=1}^N a_{i,j}(x,t)\partial_{x_i x_j}+\sum_{i=1}^N a_i(x,t)\partial_{x_i}+\partial_t,
\end{equation}
for continuous functions $a_{i,j}$ and $a_i$, and that $\mathcal{I}$ is a non-local integro-partial differential operator which for
 smooth functions $\phi$ is defined as
\begin{eqnarray}\label{operanloc}
   \mathcal{I}(x,t,\phi)&=&\int_{\mathbb R^l\setminus\{0\}} K(x,t,z,\phi,D\phi)\nu\left(dz\right),\notag\\
   K(x,t,z,\phi,p)&=&\phi\left(x+\eta\left(x,t,z\right),t\right) - \phi\left(x,t\right)\notag\\
    &&- \chi_{\{|z| \leq 1\}}(z) \sum_{k=1}^N\eta_k\left(x,t,z\right)p_k,
\end{eqnarray}
where $(x,t)\in\mathbb R^N\times\mathbb R$, $D\phi=(\partial_{x_1}\phi,...,\partial_{x_N}\phi)$,  $p\in\mathbb R^N$. Here $\nu$ is a positive Radon measure and, for each $i \in \{1,...,N\}$, $\eta_i$ is a function taking values in $\mathbb R^N$. $\chi_{\{|z| \leq 1\}}$ is the indicator function for the closed unit ball in  $\mathbb R^l$. Operators $\mathcal{H}=\mathcal{L}+\mathcal{I}$ occur, for instance, in the context of financial markets where the
dynamics of the state variables is described by an $N$-dimensional Levy process
$X=\left(X_s^{x,t}\right)$ solving the system of stochastic differential equations
\begin{eqnarray}\label{e-SDE}
dX_s^{x,t}&=&a(X_s^{x,t},s)  ds+\sigma(X_s^{x,t},s)dW_s\notag\\
&&+\int_{ z \in \R^l}\eta(X_{s^-}^{x,t},s,z) d\tilde N(ds,dz),
\end{eqnarray}
for $ t \leq s \leq T $, with initial condition $X_{s}^{x,t}=x$, $0\leq s \leq t$, $(x,t)\in\rn\times[0,T]$. Here $W=\{W_t\}$ denotes a standard $N$-dimensional Brownian motion and
\begin{equation}\label{eq:jumpmeas}
\tilde N(ds,dz) = \begin{cases} \hat{N}(ds,dz)&\mbox{if } |z| \geq 1 \\
\hat N(ds,dz) - \nu(dz) ds & \mbox{if } |z| <1, \end{cases}
\end{equation}
 where $\hat N$ is a Poisson random measure on  $[0,\infty) \times \R^l$ with intensity measure ${\nu(dz) ds}$.
$\mathcal{H}$ can, in the context of \eqref{e-SDE}, be seen as the infinitesimal generator associated to $X=\left(X_s^{x,t}\right)$ and we note that the diffusion part of the system then is described by the $N\times N$-dimensional matrix $\sigma$.
 The process
$X=\left(X_s^{x,t}\right)$ can, for instance, be the electricity price or other factors which determine the price.
In the Markovian setting when the randomness stems from the Levy process $X=\left(X_s^{x,t}\right)$ in \eqref{e-SDE}, the problem in \eqref{eq4+} is a system of variational inequalities with inter-connected obstacles related to multi-modes optimal switching problems. Our main results concern existence and uniqueness
of viscosity solutions to the system in \eqref{eq4+} under mild regularity, growth and
structural assumptions on the data, i.e., on the operator $\mathcal{H}$ and on continuous functions $\psi_i$, $c_{i,j}$, and
$g_i$.

In multi-modes optimal switching problems the system in \eqref{eq4+} occurs with $g=(g_1,\dots,g_d)\equiv(0,\dots,0)$.  To outline the setting
for this class of problems, consider a production facility which can run the  production in $d$, $d\geq 2$, production modes. Let $X=\left(X_s^{x,t}\right)$ denote the vector of stochastic processes in \eqref{e-SDE} which, as discussed above,  represents the market price of the underlying commodities and other finance assets that influence the production. Let the payoff rate in production mode $i$, at time $t$, be $\psi_i(X_t,t)$ and let $c_{i,j}(X_t,t)$ denote the switching cost for switching from mode $i$ to mode $j$ at time $t$. A management strategy is a combination of a non-decreasing sequence of stopping times $\{\tau_k\}_{k\geq 0}$, where, at time $\tau_k$, the manager decides to switch production from its current mode to another one, and a sequence of indicators $\{\xi_k\}_{k\geq 0}$, taking values in $\{1,\dots,d\}$, indicating the mode to which the production is switched. At $\tau_k$  the production is switched from mode $\xi_{k-1}$ (current mode) to $\xi_k$. A strategy $(\{\tau_k\}_{k\geq 0},\{\xi_k\}_{k\geq 0})$ can be represented by the simple function
$$
\mu_s=\sum_{i\geq 1} \xi_i \chi_{(\tau_i, \tau_{i+1} ]}(s) + \xi_0 \chi_{[\tau_0, \tau_1 ]}(s).$$
When the production is run using a strategy $\mu$, defined by $(\{\tau_k\}_{k\geq 0},\{\xi_k\}_{k\geq 0})$, over a finite horizon $[0,T]$, the total expected
profit up to time $T$ is
\begin{eqnarray}\label{eq1}
J(\mu)=E\biggl [\biggl(\int\limits_0^T\psi_{\mu_s}(X_s,s)ds-\sum_{k\geq 1}c_{\xi_{{k-1}},\xi_{k}}(X_{\tau_{k}},\tau_{k})\biggr )\biggr ].
\end{eqnarray}
The optimal switching problem consists of finding an optimal management strategy
$\mu^\ast$, defined by $(\{\tau_k^\ast\}_{k\geq 0},\{\xi_k^\ast\}_{k\geq 0})$, such that
\begin{eqnarray}\label{eq2}
J(\mu^\ast)=\sup_{\mu}J(\mu).
\end{eqnarray}
Let $(Y_t^1,\dots,Y_t^d)$ be the value function associated with the optimal switching problem, on the time interval $[t,T]$,
where $Y_t^i$ stands for the optimal expected profit if, at time $t$,
the production is in mode $i$. Under sufficient assumptions, see for example \cite{BJK10}, it can then be proved that $(Y_t^1,\dots,Y_t^d)=(u_1(X_t,t),\dots,u_d(X_t,t))$,
where the vector of deterministic functions $(u_1(x,t),\dots,u_d(x,t))$ satisfies \eqref{eq4+} with $g=(g_1,\dots,g_d)\equiv(0,\dots,0)$.

\subsection{Assumptions} We here outline the assumptions we impose on  $\mathcal{H}$ and on the functions $\psi_i$, $c_{i,j}$, and
$g_i$. Firstly, focusing on $\mathcal{H}=\mathcal{L}+\mathcal{I}$ we impose additional structural assumptions on the matrix
$\{a_{i,j}\}_{i,j=1}^N$. 
In particular, we assume that
\begin{eqnarray}\label{assump1el}
a_{i,j}(x,t)=(\sigma(x,t)\sigma^\ast(x,t))_{i,j},\ i,j\in\{1,\dots,N\},
\end{eqnarray}
where $\sigma=\sigma(x,t)$ is an $N\times N$ matrix and $\sigma^\ast$ is the transpose of $\sigma$. Concerning regularity and growth conditions on
$\{a_{i,j}\}_{i,j=1}^N$ and $\{a_{i}\}_{i=1}^N$, we assume that
\begin{eqnarray}\label{assump1elel}
(i)&&|a_i(x,t)-a_i(y,s)|+|\sigma_{i,j}(x,t)-\sigma_{i,j}(y,s)|\leq A|x-y|,\notag\\
(ii)&& |a_i(x,t)|+|\sigma_{i,j}(x,t)|\leq A(1+|x|),
\end{eqnarray}
for some $A$, $1\leq A<\infty$, for all $ i,j\in\{1,\dots,N\}$, and whenever $(x,t),(y,s)\in\mathbb R^N\times [0,T]$.
Here, $|x|$ is the standard Euclidean norm of $x\in\mathbb R^N$. Note that $(i)$ implies $(ii)$ in \eqref{assump1elel}.
Note also that $\{a_{i,j}\}_{i,j=1}^N$ is only assumed non-negative definite and hence large sets of entries in the
matrices $\{a_{i,j}\}_{i,j=1}^N$,  $\{\sigma_{i,j}\}_{i,j=1}^N$, may be zero at points resulting in degeneracies.

 The above restrictions define the local part, $\mathcal{L}$, of $\mathcal{H}$. Focusing on the non-local part, $\mathcal{I}$, of $\mathcal{H}$, we assume that $\nu$ is a positive Radon measure defined on $\mathbb R^l\setminus\{0\}$ such that
\begin{eqnarray}\label{assump1elelnloc}
\int_{0<|z|\leq 1} |z|^2 \nu\left(dz\right) + \int_{1<|z|} e^{\Lambda|z|} \nu\left(dz\right) \leq \tilde A
\end{eqnarray}
for some constant $\tilde A$, $1\leq \tilde A<\infty$, and for some $\Lambda>0$. In addition we assume that $\eta_k$ is, for $k\in\{1,...,N\}$,
continuous in $x$ and $t$, Borel measurable in $z$,  and that
\begin{eqnarray}\label{assump1elelnloc+}
|\eta_k\left(x,t , z\right)| &\leq&\tilde B\min\{|z|,1\},\notag \\
|\eta_k\left(x, t, z\right) - \eta_k\left(y, t, z\right)| &\leq &\tilde B \min\{|z|,1\}|x - y|,
\end{eqnarray}
for some constant $\tilde B$, $1\leq \tilde B<\infty$, and for all $x,y\in\mathbb R^N$, $t\in\mathbb R$, $z\in\mathbb R^l$. This completes our definition of $\mathcal{H}$. Secondly, concerning regularity and growth conditions on $\psi_i$, $c_{i,j}$ and $g_i$ we assume that
\begin{eqnarray}\label{assump2}
&&\mbox{$\psi_i$, $c_{i,j}$ and $g_i$ are continuous functions,}\notag\\
&&\mbox{$|\psi_i(x,t)|+|c_{i,j}(x,t)|+|g_{i}(x)|\leq B(1+|x|^\gamma)$ for some }\notag\\
&&\mbox{$B,\gamma\in [1,\infty)$, whenever $(x,t)\in \mathbb R^N\times [0,T]$}.
\end{eqnarray}
Thirdly, the structural assumptions on the functions $\{c_{i,j}\}$ that we impose to establish our general
comparison principle for the system in \eqref{eq4+}, see Theorem \ref{Thm1-}, are the following.
 \begin{eqnarray}\label{assump3}
 (i)&&c_{i,i}(x,t)=0\mbox{ for each $i\in\{1,\dots.,d\}$},\notag\\
 (ii)&&\mbox{For any sequence $i_1$,\dots, $i_k$, $i_j\in\{1,\dots,d\}$ for each $j\in\{1,\dots,k\}$,}\notag\\
 &&\mbox{we have $c_{i_1,i_2}(x,t)+c_{i_2,i_3}(x,t)+\dots+c_{i_{k-1},i_k}(x,t)+c_{i_k,i_1}(x,t)>0$},\notag\\
 &&\mbox{for all $(x,t)\in \mathbb R^N\times [0,T]$.}
\end{eqnarray}
 Finally, concerning the interplay between the terminal data $\{g_i\}$ and the switching costs $\{c_{i,j}\}$, at $t=T$, we assume that
 \begin{eqnarray}\label{assump2el}
 g_i(x)\geq\max_{j\neq i}(-c_{i,j}(x,T)+g_j(x))
 \end{eqnarray}
 for all $ i\in\{1,\dots,d\}$, and for all $x\in\mathbb R^N$.  Note that in the special case of the optimal switching problem discussed above, i.e., $g=(g_1,\dots,g_d)\equiv(0,\dots,0)$,
 then \eqref{assump2el} implies that $c_{i,j}(x,T)\geq 0$ for all $ i,j\in\{1,\dots,d\}$, and for all $x\in\mathbb R^N$.  Focusing on the structural assumptions on the functions $\{c_{i,j}\}$, we emphasize that in our final existence theorem, see Theorem \ref{Thm1+}, we assume regularity
of $c_{i,j}(x,t)$ beyond continuity, see \eqref{assump2+}, and that
\begin{eqnarray}\label{assump3+}
c_{i_1,i_2}(x,t)+c_{i_2,i_3}(x,t)\geq c_{i_1,i_3}(x,t)
\end{eqnarray}
for any sequence of indices $i_1$, $i_2$, $i_3$, $i_l\in\{1,\dots,d\}$ for each $l\in\{1,2,3\}$, and for all $(x,t)\in \mathbb R^N\times [0,T]$. Note that
\eqref{assump3+} is an additional structural restriction compared to \eqref{assump3}.

\subsection{Statement of main results} We here formulate our main results. For the definition of $\mbox{LSC}_{p}(\mathbb R^N\times [0,T])$, $\mbox{USC}_{p}(\mathbb R^N\times [0,T])$ and $\mbox{C}_{p}(\mathbb R^N\times [0,T])$ as well as for the definition of viscosity sub- and supersolutions we refer to the bulk of the paper.
We first prove the following comparison principle.

\begin{theorem}\label{Thm1-}
Let $\mathcal{H}=\mathcal{L}+\mathcal{I}$ with $\mathcal{L}$, $\mathcal{I}$, as in \eqref{opera}, \eqref{operanloc}, respectively.
Assume \eqref{assump1el}, \eqref{assump1elel}, \eqref{assump1elelnloc}, \eqref{assump1elelnloc+}, \eqref{assump2}, \eqref{assump3} and \eqref{assump2el}.
Assume that $( u_1^+,\dots,u_d^+)\in \mbox{LSC}_{p}(\mathbb R^N\times [0,T])$ and $(u_1^-,\dots,u_d^-)\in \mbox{USC}_{p}(\mathbb R^N\times [0,T])$ are viscosity super- and subsolutions, respectively,  to the problem in \eqref{eq4+}.
Then $u_i^-\leq u_i^+$ in $\mathbb R^N\times (0,T]$ for all $i\in\{1,\dots,d\}$.
\end{theorem}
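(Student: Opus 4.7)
My plan is a proof by contradiction based on the doubling-of-variables technique of Crandall--Ishii, adapted to cope simultaneously with the degeneracy of $\mathcal{L}$, the non-local term $\mathcal{I}$, the system of interconnected obstacles, and the polynomial growth of the $u_i^\pm$. Assume $M_0 := \max_i \sup_{\rn\times(0,T]}(u_i^- - u_i^+) > 0$. First I would make the usual strict-supersolution perturbation $\tilde u_i^+ := u_i^+ + \alpha(T-t) + \beta$ and then subtract a smooth majorant $\lambda \chi(x,t)$ with $\chi(x,t) := e^{K(T-t)}(1+|x|^2)^{m}$, $m>\gamma$. The exponential-moment bound \eqref{assump1elelnloc} together with \eqref{assump1elelnloc+} is precisely what makes $\mathcal{H}\chi \leq C\chi$, so that $\lambda\chi$ plays the role of a smooth classical supersolution at infinity. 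The resulting upper maximum
\[
M := \max_{i}\sup_{(x,t)\in\rn\times[0,T]}\bigl(u_i^-(x,t)-u_i^+(x,t)-\lambda\chi(x,t)\bigr)
\]
is attained at some finite point, and by \eqref{assump2el} together with semicontinuity the maximizer lies in $\rn\times[0,T)$.

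Fix $i_0$ attaining $M$ and double the variables via
\[
\Phi_\varepsilon(x,t,y,s) := u_{i_0}^-(x,t)-u_{i_0}^+(y,s)-\tfrac{1}{\varepsilon}\bigl(|x-y|^2+(t-s)^2\bigr)-\tfrac{\lambda}{2}\bigl(\chi(x,t)+\chi(y,s)\bigr),
\]
obtaining a maximum point $(\hat x,\hat t,\hat y,\hat s)$ with $\varepsilon^{-1}(|\hat x-\hat y|^2+(\hat t-\hat s)^2)\to 0$. At this point I would apply the switching dichotomy: if there exists $j_1\neq i_0$ with $u_{i_0}^-(\hat x,\hat t) \leq -c_{i_0,j_1}(\hat x,\hat t) + u_{j_1}^-(\hat x,\hat t) + o(1)$, then coupling with the supersolution obstacle $u_{i_0}^+(\hat y,\hat s) \geq -c_{i_0,j_1}(\hat y,\hat s) + u_{j_1}^+(\hat y,\hat s)$ and using continuity of $c_{i_0,j_1}$ shows that $j_1$ also nearly attains $M$. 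Iterating at most $d$ times, the chain must close into a loop, and summing the obstacle identities around the loop contradicts the strict loop-positivity \eqref{assump3}(ii) once $\varepsilon,\alpha,\beta,\lambda$ are small enough. The only remaining case is $u_{i_0}^-(\hat x,\hat t) > \max_{j\neq i_0}(-c_{i_0,j}(\hat x,\hat t) + u_j^-(\hat x,\hat t))$, where the subsolution property activates the non-local PDE $-\mathcal{H}u_{i_0}^- \leq \psi_{i_0}$, and similarly $-\mathcal{H}u_{i_0}^+ \geq \psi_{i_0} - C\alpha$ on the supersolution side.

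The main technical obstacle is merging these two viscosity inequalities in the presence of $\mathcal{I}$. I would invoke the non-local Crandall--Ishii--Jensen lemma (in the spirit of Barles--Imbert or Jakobsen--Karlsen) applied to $\Phi_\varepsilon$ with test function $\varphi(x,t,y,s) := \varepsilon^{-1}(|x-y|^2+(t-s)^2) + \tfrac{\lambda}{2}(\chi(x,t)+\chi(y,s))$, producing second-order sub-/superjets together with non-local increments split over $\{|z|\leq r\}$ and $\{|z|>r\}$. On $\{|z|\leq r\}$, a second-order Taylor expansion of $\varphi$ combined with \eqref{assump1elelnloc+} controls the combined near-field contribution by $C\varepsilon^{-1}\int_{|z|\leq r}|z|^2\,\nu(dz)$, which vanishes as $r\to 0$ thanks to \eqref{assump1elelnloc}. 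On $\{|z|>r\}$ one uses the maximum property of $\Phi_\varepsilon$ to bound the difference $u_{i_0}^-(\hat x+\eta(\hat x,\hat t,z),\hat t)-u_{i_0}^+(\hat y+\eta(\hat y,\hat s,z),\hat s)$ above by its value at $(\hat x,\hat y)$ plus controlled perturbations of $\varphi$; the Lipschitz estimate \eqref{assump1elelnloc+} and the exponential tail bound \eqref{assump1elelnloc} keep these integrals finite despite the polynomial growth of $u_i^\pm$. The Crandall--Ishii matrix inequality combined with the factorization \eqref{assump1el} gives the local term the correct sign, and sending $r\to 0$, $\varepsilon\to 0$, $\lambda\to 0$, and finally $\alpha,\beta\to 0$ produces the desired contradiction. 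The most delicate bookkeeping concerns the compensator $\chi_{\{|z|\leq 1\}}\sum_k\eta_k p_k$ in $\mathcal{I}$, where careful tracking of the test-function gradients is needed to see the cancellation between the sub- and supersolution sides.
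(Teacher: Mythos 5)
Your proposal follows the paper's argument in all essential respects: a polynomial barrier in space and a strictness perturbation in time to make the supremum attained and strict at an interior point, the no-loop argument based on \eqref{assump3}(ii) to locate an index for which the subsolution PDE (rather than the obstacle) is active, doubling of variables via the Barles--Imbert non-local Jensen--Ishii lemma, and the $\kappa$-splitting of $\mathcal{I}$ into small jumps (Taylor expansion plus the $|z|^2$ moment in \eqref{assump1elelnloc}) and large jumps (maximum property of $\Phi_\e$ plus boundedness of $\eta$). The one place that needs more care is sequencing: the paper uses the no-loop condition to single out the strictly-above-obstacle index $k$ at the \emph{limit} point $(\bar x,\bar t)$ \emph{before} any doubling (which ensures, by upper semicontinuity, that the subsolution obstacle stays inactive at $(x_\e,t_\e)$ for small $\e$), whereas your iteration of the switching dichotomy at the doubled maximum would require re-doubling separately for each new index $j_1,j_2,\dots$ (or a subsequence argument fixing the chain pattern) before you may legitimately invoke the viscosity subsolution dichotomy for $u_{j_1}^-,u_{j_2}^-,\dots$; this can be patched but the paper's ordering avoids the issue entirely.
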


Before stating our existence theorems, we make the following definition.

\begin{definition} \label{barr1}
A barrier from above for the system in \eqref{eq4+}, component $i \in \{1,\dots, d \}$ and the point $y\in \R^N$, $u^{+,i,y}$, is a family of continuous supersolutions, $\{u^{+,i,y,\e}\}_{\e>0}$, $u^{+,i,y,\e}\in \mbox{C}_{p}(\mathbb R^N\times [0,T])$, to system \eqref{eq4+} such that $$\lim_{\e\to 0} u_i^{+,i,y,\e}(y,T) = g_i(y).$$ A barrier from below for the system in \eqref{eq4+}, component $i \in \{1,\dots, d \}$ and the point $y\in \R^N$, $u^{-,i,y}$, is a family of continuous subsolutions, $\{u^{-,i,y,\e}\}_{\e>0}$, $u^{-,i,y,\e}\in \mbox{C}_{p}(\mathbb R^N\times [0,T])$, to system \eqref{eq4+} such that $$\lim_{\e\to 0} u_i^{-,i,y,\e}(y,T) = g_i(y).$$
\end{definition}

\noindent
To stress generality, we first prove the following theorem.

\begin{theorem}\label{Thm1aa} Let $\mathcal{H}=\mathcal{L}+\mathcal{I}$ with $\mathcal{L}$, $\mathcal{I}$, as in \eqref{opera}, \eqref{operanloc}, respectively.
Assume \eqref{assump1el}, \eqref{assump1elel}, \eqref{assump1elelnloc}, \eqref{assump1elelnloc+}, \eqref{assump2},\eqref{assump3} and \eqref{assump2el}. In addition, assume that
\begin{enumerate}
  \item there exists, for
each $i\in\{1,\dots,d\}$ and $y\in \R^N$, a barrier from above  $u^+=u^{+,i,y}$ to the system in \eqref{eq4+} in the sense of Definition \ref{barr1},
  \item there exists, for
each $i\in\{1,\dots,d\}$ and $y\in \R^N$, a barrier from below $u^-=u^{-,i,y}$ to the system in \eqref{eq4+} in the sense of Definition \ref{barr1}.
\end{enumerate}
Then there exists a viscosity solution $(u_1,\dots,u_d)\in \mbox{C}_{p}(\mathbb R^N\times [0,T])$ to the problem in
\eqref{eq4+}, $ u_i^-\leq u_i\leq u_i^+$ on $\mathbb R^N\times [0,T]$, for $i\in\{1,\dots,d\}$, and this solution is unique in the class $ \mbox{C}_{p}(\mathbb R^N\times [0,T])$.
\end{theorem}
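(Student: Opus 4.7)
The plan is a Perron-type construction closed by Theorem~\ref{Thm1-}. First I would construct global polynomial-growth classical super- and subsolutions $U^\pm=(U^\pm_1,\ldots,U^\pm_d)$ of the system. Functions of the form $\pm C(1+|x|^{2\gamma})e^{K(T-t)}$, with $C,K$ sufficiently large depending on the structural constants in \eqref{assump1elel}--\eqref{assump2}, should serve: the exponential moment in \eqref{assump1elelnloc} bounds $\mathcal{I}$ applied to $(1+|x|^{2\gamma})$ by a polynomial of the same order, the linear growth in \eqref{assump1elel} handles the local part $\mathcal{L}$, and \eqref{assump2} together with \eqref{assump2el} allow the obstacle and terminal inequalities to hold once $C$ is chosen large relative to $B$. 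This produces a polynomial band inside which, after truncation against $U^\pm$, both the pointwise barriers from Definition~\ref{barr1} and all candidate subsolutions will live.

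Following Perron's approach adapted to systems, I then set
\[
u_i(x,t) := \sup\{v_i(x,t) : (v_1,\ldots,v_d) \in \mathcal{S}\}, \quad i=1,\ldots,d,
\]
where $\mathcal{S}$ is the non-empty family of viscosity subsolutions of \eqref{eq4+} satisfying $U^-_j \leq v_j \leq U^+_j$ for every $j$. The heart of the argument is the pair of claims: (a) the upper-semicontinuous envelopes $(u_i^*)_i$ form a subsolution of \eqref{eq4+} on $\mathbb{R}^N\times(0,T)$; and (b) the lower-semicontinuous envelopes $(u_{i,*})_i$ form a supersolution there. Claim (a) is verified by showing that suprema of subsolutions pass under upper envelopes in both the PDE and the obstacle inequality, the non-local operator being handled by splitting \eqref{operanloc} into a small-jump piece controlled pointwise by $|z|^2$ and a large-jump piece integrable against $\nu$ by \eqref{assump1elelnloc}, so that stability under locally uniform convergence applies. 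Claim (b) is the classical bump construction: if $(u_{i,*})_i$ failed to be a supersolution at some interior $(x_0,t_0)$, one should be able to add a small smooth compactly supported perturbation to the $i$-th component near $(x_0,t_0)$ and produce a strictly larger member of $\mathcal{S}$, contradicting the supremum definition of $u_i$.

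With (a) and (b) in hand, Theorem~\ref{Thm1-} yields $u_i^* \leq u_{i,*}$ on $\mathbb{R}^N\times(0,T]$, hence $u_i^* = u_{i,*} = u_i$ is continuous and solves \eqref{eq4+} in the interior. To enforce the terminal condition $u_i(\cdot,T) = g_i$, I would invoke the barriers: comparing any $v \in \mathcal{S}$ against the truncated barrier $\min(U^+_i, u^{+,i,y,\epsilon}_i)$ and letting $\epsilon \downarrow 0$ gives $u_i(y,T) \leq g_i(y)$, while the reverse inequality follows by comparing $\max(U^-_i, u^{-,i,y,\epsilon}_i)$ against $u_i$ from below and again sending $\epsilon \downarrow 0$. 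Uniqueness in $\mbox{C}_p(\mathbb{R}^N\times[0,T])$ is then immediate from Theorem~\ref{Thm1-} applied to any two candidate solutions, and the sandwich $u^-_i \leq u_i \leq u^+_i$ follows from the comparison inequalities used along the way.

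The main obstacle I foresee is the bump step (b) in the Perron argument: the perturbation of $u_{i,*}$ must be compatible simultaneously with the degenerate second-order local operator, the non-local integro-differential operator (which ``sees'' the bump from arbitrarily far away via the jump measure $\nu$), and the interconnected obstacle constraints linking the $d$ components. The last point is the most delicate, because raising $u_i$ tightens the obstacle $u_j \geq -c_{j,i}+u_i$ at every other index $j\ne i$. One must exploit strict failure of the supersolution inequality at $(x_0,t_0)$, choose the bump sufficiently small and localised so that $\mathcal{I}$ applied to it is controlled via the split of \eqref{operanloc} together with \eqref{assump1elelnloc+}, and invoke continuity of the $c_{i,j}$ from \eqref{assump2} to propagate the strict inequalities to a full neighbourhood in which the bumped function still belongs to $\mathcal{S}$.
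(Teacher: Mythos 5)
Your overall Perron architecture matches the paper's, up to taking the dual: you take the supremum over subsolutions while the paper takes the infimum over supersolutions, and both then pass the semicontinuous envelopes through Theorem~\ref{Thm1-} and use the assumed barriers to pin down the terminal data. That part of the plan is sound and essentially equivalent.

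The genuine gap is in your preliminary construction of the global bounds $U^\pm$. You propose $U^+_i = C(1+|x|^{2\gamma})e^{K(T-t)}$ (the same function for every component $i$) and claim that choosing $C$ large ``allows the obstacle inequalities to hold.'' But a supersolution must satisfy the obstacle constraint pointwise at every interior point, namely $U^+_i(x,t) \geq \max_{j\neq i}\bigl(-c_{i,j}(x,t) + U^+_j(x,t)\bigr)$. With all components identical this reduces to $\min_{j\neq i} c_{i,j}(x,t) \geq 0$, i.e., all switching costs must be nonnegative. That is exactly what the paper does \emph{not} assume: the hypotheses of Theorem~\ref{Thm1aa} only give the no-loop condition \eqref{assump3} and the terminal-time compatibility \eqref{assump2el}, and the whole point of the paper is to allow $c_{i,j}(x,t)<0$ for $t<T$. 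So as stated, $U^+$ fails to be a supersolution precisely in the regime the theorem is meant to cover, and no choice of $C$ or $K$ repairs this, since the offending term $U^+_i - U^+_j$ vanishes identically. (The subsolution $U^-$ is not a problem, because for a subsolution it suffices that the PDE term is $\leq 0$.)

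The cleanest repair is the one the paper in fact uses: you do not need a freestanding supersolution bound at all. Define $u_i$ directly as the infimum over all $\mbox{LSC}_p$ supersolutions (a non-empty class, since the assumed barriers from above are themselves supersolutions), and note it is bounded below because, by Theorem~\ref{Thm1-}, every supersolution dominates every subsolution, and the assumed barriers from below furnish subsolutions. Equivalently, if you keep the sup-over-subsolutions framing, bound the family from above by any single barrier $u^{+,i,y,\e}$ rather than by a hand-built $U^+$. If you insist on building a classical $U^+$ with a common radial profile, you would need component-dependent additive shifts, and the existence of such shifts is precisely a dual feasibility question governed by the no-loop condition \eqref{assump3}; this is doable but nontrivial, it is not addressed in your sketch, and it is unnecessary given the barriers you have already assumed. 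Apart from this, your high-level discussion of claims (a) and (b) (envelope stability for the non-local operator and the bump argument) is at the right level of detail; the paper delegates that step to the corresponding passages of \cite{BJK10}.
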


To establish existence of a viscosity solution $(u_1,\dots,u_d)$ to the problem in
\eqref{eq4+}, it hence remains to construct barriers in the sense of Definition \ref{barr1}.
To do so we impose additional assumptions on the switching costs $c_{i,j}$, see Theorem \ref{Thm1+} below.
In particular, we prove that for fixed $i \in \{1,\dots, d\}$, $y \in \R^N$ and all $j\in\{1,\dots,d\}$,
\begin{eqnarray*}
u_j^{+,i,y,\e}(x,t) &=& g(y)+\frac{ K}{\e^2}(T-t)\notag\\
&& + L(e^{\lambda(T-t)}+1)(|x-y|^2 + \e)^{\frac 12} + c_{i,j}(x,t),
\end{eqnarray*}
is a barrier from above if $K$ and $\lambda$ are large enough.
Here $L$ is the Lipschitz-constant of $g(x)$.
Since $c_{i,i} = 0$ by assumption, $u_j^{+,i,y,\e}$ attains the terminal data $g$ as $\ep \to 0$ and hence,
from Theorem \ref{Thm1aa}, we now deduce existence of a viscosity solution to the problem in \eqref{eq4+}.
In particular, we prove the following theorem.

\begin{theorem}\label{Thm1+} Let $\mathcal{H}=\mathcal{L}+\mathcal{I}$ with $\mathcal{L}$, $\mathcal{I}$, as in \eqref{opera}, \eqref{operanloc}, respectively.
Assume \eqref{assump1el}, \eqref{assump1elel}, \eqref{assump1elelnloc}, \eqref{assump1elelnloc+}, \eqref{assump2}, \eqref{assump3} and \eqref{assump2el}. Assume also that $g_i=g$, for all $i \in \{1,\dots, d\}$, for some Lipschitz continuous function $g$, that $ c_{i,j} \in C^{1,2}(\mathbb R^N\times [0,T))$, and that \eqref{assump3+} holds
for any sequence of indices $i_1$, $i_2$, $i_3$, $i_j\in\{1,\dots.,d\}$ for each $j\in\{1,2,3\}$, and for all $(x,t)\in \mathbb R^N\times [0,T]$. Furthermore, assume that
\begin{eqnarray}\label{assump2+}
 &&\mbox{$ \partial _{x_k} c_{i,j} \in L^\infty(\mathbb R^N\times [0,T])$} \notag \\
&&\mbox{$|\psi_i(x,t)|, |g(x,t)|, |\partial_t c_{i,j}(x,t)|,  |\partial_{x_k x_l}c_{i,j}(x,t)|(1+|x|^2)  \leq \tilde A( 1 + |x|)$}
 \end{eqnarray}
 for all $(x,t)\in \mathbb R^N\times [0,T]$ and for some $\tilde A$, $1\leq\tilde A<\infty$. Then there exists a unique viscosity solution $(u_1,\dots,u_d)$ to the problem in
\eqref{eq4+},  unique in the sense defined in Theorem \ref{Thm1aa}.
\end{theorem}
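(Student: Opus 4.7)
The plan is to invoke Theorem \ref{Thm1aa}, reducing the problem to exhibiting, for each $i \in \{1,\dots,d\}$ and $y \in \mathbb{R}^N$, barriers from above and below in the sense of Definition \ref{barr1}. The candidate proposed in the excerpt will serve as the barrier from above; a twin expression with the signs reversed will serve as the barrier from below.

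First I would verify the obstacle inequality for $u_j^{+,i,y,\varepsilon}$. Since the $j$-dependence enters only through $c_{i,j}$, we have $u_j^{+,i,y,\varepsilon} - u_k^{+,i,y,\varepsilon} = c_{i,j} - c_{i,k}$, so
\[
u_j^{+,i,y,\varepsilon} - \max_{k\neq j}\bigl(-c_{j,k} + u_k^{+,i,y,\varepsilon}\bigr) = \min_{k\neq j}\bigl(c_{i,j} + c_{j,k} - c_{i,k}\bigr) \geq 0
\]
by the triangle-type hypothesis \eqref{assump3+}. The terminal value at $(y,T)$ equals $g(y) + 2L\sqrt{\varepsilon} + c_{i,i}(y,T) = g(y) + 2L\sqrt{\varepsilon}$ since $c_{i,i}\equiv 0$, and tends to $g(y)$ as $\varepsilon \to 0$.

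Next I would check the PDE inequality $-\mathcal{H}u_j^{+,i,y,\varepsilon} - \psi_j \geq 0$. With $\phi_\varepsilon(z) := (|z|^2+\varepsilon)^{1/2}$, the time derivative contributes the dominant negative term $-K/\varepsilon^2 - L\lambda e^{\lambda(T-t)}\phi_\varepsilon(x-y)$. The spatial derivatives satisfy $|D\phi_\varepsilon|\leq 1$ and $\|D^2\phi_\varepsilon\|\leq C\min\{\varepsilon^{-1/2},|x-y|^{-1}\}$, and combining these with the bounds on $a_{i,j}, a_i$ from \eqref{assump1elel} and the $L^\infty$ gradient bound and inverse-quadratic decay of $D^2 c_{i,j}$ from \eqref{assump2+}, the contribution of $\mathcal{L}$ to $-\mathcal{H}u_j^{+,i,y,\varepsilon}$ beyond the dominant negative term is of order $O_\varepsilon(1+|x-y|)$. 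For $\mathcal{I}$, a second-order Taylor expansion on $\{|z|\leq 1\}$ yields a bound $\|D^2 u_j^{+,i,y,\varepsilon}\|_\infty \int_{|z|\leq 1}|z|^2\,\nu(dz) < \infty$ by \eqref{assump1elelnloc}, while on $\{|z|>1\}$ the linear growth of $u_j^{+,i,y,\varepsilon}$ is integrated against the exponentially decaying tail of $\nu$, again finite by \eqref{assump1elelnloc}; in total, $\mathcal{I}$ contributes at most $O_\varepsilon(1+|x-y|)$. Together with $|\psi_j|\leq \tilde A(1+|x|)$ from \eqref{assump2+},
\[
-\mathcal{H}u_j^{+,i,y,\varepsilon}-\psi_j \geq \tfrac{K}{\varepsilon^2} + L\lambda e^{\lambda(T-t)}\phi_\varepsilon(x-y) - C(\varepsilon,y)(1+|x-y|).
\]
I would then choose $\lambda$ large enough so that $L\lambda\,\phi_\varepsilon(x-y) \geq 2C(1+|x-y|)$ uniformly in $x$, and subsequently $K$ large enough so that $K/\varepsilon^2$ absorbs the residual constant; the supersolution property then holds globally in $\mathbb{R}^N\times[0,T]$.

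For the barrier from below, the subsolution condition $\min\{\cdot,\cdot\}\leq 0$ requires only one of the two factors to be non-positive, so it suffices to secure the PDE part. I would take
\[
u_j^{-,i,y,\varepsilon}(x,t) := g(y) - \frac{K}{\varepsilon^2}(T-t) - L(e^{\lambda(T-t)}+1)\phi_\varepsilon(x-y)
\]
(independent of $j$) and establish $-\mathcal{H}u_j^{-,i,y,\varepsilon}-\psi_j \leq 0$ by repeating the preceding estimate with signs reversed; terminal values again converge to $g(y)$ as $\varepsilon\to 0$. Theorem \ref{Thm1aa} then delivers existence and uniqueness of the viscosity solution in $\mbox{C}_p(\mathbb{R}^N\times[0,T])$. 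The main technical obstacle is precisely the $\mathcal{L}$-estimate of $\phi_\varepsilon$: one must simultaneously control the $\varepsilon^{-1/2}$ Hessian singularity at $x=y$ (absorbed by $K/\varepsilon^2$ since $(1+|y|)^2$ is a fixed constant there) and the linear growth of $\mathcal{L}\phi_\varepsilon$ at infinity (absorbed by the $\lambda$ term), so that one single pair $(K,\lambda)$ works uniformly in $x$; the exponential tail hypothesis \eqref{assump1elelnloc} is what allows the parallel control of the non-local operator $\mathcal{I}$.
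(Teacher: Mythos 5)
Your proposal is correct and follows the paper's approach almost exactly: reduce to Theorem \ref{Thm1aa}, then exhibit the explicit barrier family \eqref{hupa-} from above, using $c_{i,i}=0$ for the terminal limit at $(y,T)$, \eqref{assump2el} for $u^{+}(x,T)\geq g(x)$, the triangle inequality \eqref{assump3+} for the obstacle factor, and derivative estimates on $\phi_\e$ and $c_{i,j}$ together with \eqref{assump1elel}, \eqref{assump1elelnloc}, \eqref{assump1elelnloc+}, \eqref{assump2+} to handle the $\mathcal{L}$ and $\mathcal{I}$ contributions to the PDE factor, with $K$ large absorbing the $\e$-singularity near $x=y$ and $\lambda$ large absorbing the linear growth in $|x-y|$. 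The only departure is that your barrier from below omits the $-c_{i,j}$ term appearing in the paper's \eqref{hupa+}; this is a small legitimate simplification, justified by your (correct) observation that the subsolution condition $\min\{\cdot,\cdot\}\leq 0$ requires only the PDE factor to be non-positive, so no obstacle check is needed on the subsolution side, and the terminal inequality $u^-_j(x,T)\leq g(x)$ follows from the Lipschitz bound on $g$ alone.
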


\begin{remark} Through $\mathcal{H}=\mathcal{L}+\mathcal{I}$ we see that $\mathcal{H}$ is written as the sum of the local operator $\mathcal{L}$ and the non-local operator
 $\mathcal{I}$. Note that by the assumptions in Theorem \ref{Thm1-}-Theorem \ref{Thm1+} the matrix $\{a_{i,j}(x,t)\}$ is only assumed to be non-negative definite and as such it can vanish at points. Similarly the jump vector $\eta=\eta(x,t,z)$ is allowed to vanish. A consequence of this is that there are no regularization effects in the problem coming either from  $\mathcal{L}$ or  $\mathcal{I}$. Therefore the system in \eqref{eq4+} can not be expected to have classical solutions and hence an appropriate notion of viscosity solutions is needed.
\end{remark}

\begin{remark} $\mathcal{H}$ can be seen as the infinitesimal generator associated with a Levy process described by \eqref{e-SDE}. In this context, the assumptions stated in \eqref{assump1elelnloc} exclude some Levy processes as a  Levy measure $\nu$ in general only satisfies
$$\int \min\{|z|^2,1\}\nu(dz) < \infty.$$ However, for many, if not most, applications the class of Levy processes considered in this paper is sufficiently rich since, e.g., any Levy process with compactly supported Levy measure satisfies \eqref{assump1elelnloc}.
\end{remark}

\begin{remark} In \cite{BJK10} the authors basically prove Theorem \ref{Thm1-}-Theorem \ref{Thm1+} for a combined optimal switching and control problem essentially assuming
$\mathcal{H}=\mathcal{L}+\mathcal{I}$ with $\mathcal{L}$, $\mathcal{I}$, as in \eqref{opera}, \eqref{operanloc}, respectively, and \eqref{assump1el}, \eqref{assump1elel}, \eqref{assump1elelnloc}, \eqref{assump1elelnloc+}, and \eqref{assump2} with $\gamma=2$. Our contribution is that we prove Theorem \ref{Thm1-}-Theorem \ref{Thm1+} allowing for much more general switching costs compared to \cite{BJK10}.
\end{remark}

\begin{remark} Our proofs of Theorem \ref{Thm1-}-Theorem \ref{Thm1+} are influenced by the corresponding arguments in \cite{BJK10}, \cite{AF12},
and by the arguments in \cite{LNO12} where versions of \ref{Thm1-}-Theorem \ref{Thm1+} are proved in the case when
$\mathcal{H}\equiv\mathcal{L}$, i.e., in the case of local operators. In the latter paper the problem of regularity of viscosity solutions was also treated in the context of operators of Kolmogorov type.
\end{remark}

%

\begin{remark} Theorem \ref{Thm1-}-Theorem \ref{Thm1+} remain true for the more general non-linear problems 
\begin{eqnarray}\label{eq4+bla}
&&\min\biggl\{- \sup_{\alpha\in A_i} \inf_{\beta\in B_i}\big[\mathcal{H}^i_{\alpha, \beta}u_i(x,t)-\psi_i^{\alpha, \beta}(x,t)\big],\\
&&u_i(x,t)-\max_{j\neq i}(-c_{i,j}(x,t)+u_j(x,t))\biggr\}=0,\notag\\
&&u_i(x,T)=g_i(x),\ i\in\{1,\dots,d\},
\end{eqnarray}
where $A_i, B_i$ are compact metric spaces and $\mathcal{H}^i_{\alpha, \beta}=\mathcal{L}_{\alpha, \beta}^i+\mathcal{I}_{\alpha, \beta}^i$, for each $i\in\{1,\dots,d\}$, is as in \eqref{opera}, \eqref{operanloc}, for some
$\sigma_{k,l,\alpha, \beta}^i$, $a_{k,l,\alpha, \beta}^i$, $a_{k,\alpha, \beta}^i$, $\nu$, $\eta^i_{\alpha, \beta}$ satisfying \eqref{assump1el}, \eqref{assump1elel}, \eqref{assump1elelnloc}, and \eqref{assump1elelnloc+} uniformly in $\alpha, \beta$. 
Equations of this type are considered in \cite{BJK10} and arise for example as the Bellman-Isaacs equations for zero-sum stochastic games.
\end{remark}

\subsection{Our contribution in relation to the current literature} In this paper we consider
optimal switching problems for operators $\mathcal{H}=\mathcal{L}+\mathcal{I}$, where $\mathcal{L}$ is a  local operator and  $\mathcal{I}$ is non-local operator, allowing for general switching costs. As a general comment we note that there is currently a substantial literature devoted
 to the the case of local operators, i.e., $\mathcal{H}\equiv\mathcal{L}$, and that considerably less is known in the case when non-local effects are allowed.

 In the local setting there is a well-established connection between the theory of reflected backward stochastic differential equations driven by Brownian motion and multi-modes optimal switching problems, see  \cite{AF12}, \cite{DH09}, \cite{DHP10}, \cite{HT07} and \cite{HZ10}. In general the research on optimal switching problems focuses on two approaches: the probabilistic or stochastic approach and the deterministic approach.  We note that the stochastic approach heavily uses probabilistic tools such as the Snell envelope and backward stochastic differential equations. For more on the stochastic approach we refer to the references above concerning reflected backward stochastic differential equations. The latter, deterministic approach focuses more on variational inequalities and partial differential equations and we refer, in the local setting, to  \cite{AH09} and \cite{LNO12}. In general, in the local setting the two approaches are used in combination due to the connection between reflected backward stochastic differential equations and systems of variational inequalities. The results in the literature concerning the optimal switching problem, i.e., system \eqref{eq4+} with $g=(g_1,\dots,g_d)\equiv(0,\dots,0)$, are derived under different assumptions on the switching costs $\{c_{i,j}\}$. We note that the switching costs in the references listed above are essentially always assumed to be non-negative, i.e., $c_{i,j}(x,t)\geq \alpha$ for all $(x,t)\in\mathbb R^N\times [0,T]$,
$i,j\in\{1,\dots,d\}$, $i\neq j$, and for some $\alpha \geq 0$. Often, even $\alpha>0$ is assumed, see \cite{AH09} and \cite{DH09} for instance. Furthermore, often additional restrictions are imposed on $\{c_{i,j}\}$, like $c_{i,j}$ is independent of $x$, see \cite{DHP10} for instance, or that $c_{i,j}$ is even constant, see \cite{DH09}, \cite{BJK10}.
The only papers we are aware of where the switching costs are allowed to change sign are \cite{AF12} and \cite{LNO12}. However, in \cite{AF12} there are two additional conditions concerning the non-negativity of the switching costs: $c_{i,j}(x, T ) = 0$ and the number of negative switching costs is limited in a certain sense, see condition $(v)$ in \cite{AF12}. In \cite{LNO12} the assumptions imposed on the switching costs are in line with the assumptions imposed in this paper.

In the non-local setting the connections between a theory of reflected backward stochastic differential equation driven by Levy processes and multi-modes optimal switching problem seems to be considerably  less developed. In general the theory for non-local operators is currently an active area of research. We here refer to \cite{A07}, \cite{BI08}, \cite{BJK10} and \cite{P97} for an account of this and for references, but to our knowledge \cite{BJK10} is the only paper that makes contribution to multi-mode optimal switching problems involving non-local operators, i.e., involving non-local effects through Levy processes that are allowed to jump. Building on \cite{BJK10} our main contribution is that while in \cite{BJK10} the switching costs are assumed to be non-negative
constants, our baseline assumption on the functions $\{c_{i,j}\}$ is that we make no sign assumption on the switching costs $\{c_{i,j}\}$ and that $c_{i,j}$ is allowed to depend on $x$ as well as $t$. Naturally one may ask whether the allowance for possibly negative switching costs is of importance beyond the mathematical issues. In fact, it is very natural to allow for negative switching costs as these allow one to model the situation when, for example, a government through environmental policies provide subsidies, grants or other financial support to energy production facilities in case they switch to more `green energy' production or to more clean methods for production. In this case it is not a cost for the facility to switch, it is a gain. However, the final decision to switch or not to switch is naturally also influenced by the running pay-offs $\{\psi_i\}$.

\subsection{Outline of the paper}
The rest of this paper is organized as follows. Section \ref{sec:preliminaries} is of preliminary nature and we here introduce some notation and state the definition of viscosity subsolutions, supersolutions and solutions. Section \ref{sec:compprinciple} is devoted to the proof of Theorem \ref{Thm1-} and in  section \ref{sec:exuni} we prove  Theorem \ref{Thm1aa} and Theorem \ref{Thm1+}.

\setcounter{equation}{0} \setcounter{theorem}{0}
\section{Preliminaries}\label{sec:preliminaries}
\noindent
In this section we introduce some notation used throughout the paper and we define viscosity sub- and supersolutions to the problem in \eqref{eq4+}.
\subsection{Notation} We denote by $\mbox{LSC}(\mathbb R^N\times [0,T])$ the set of lower semi-continuous functions, i.e., all functions $f: \mathbb R^N\times [0,T] \to \R$ such that for all points $(x_0,t_0)$ and for any sequence $\{(x_n, t_n)\}_n$,  $\lim _{n \to \infty} (x_n,t_n) \to (x_0,t_0)$ in $\mathbb R^N\times [0,T]$, we have
\begin{equation*}
\liminf _{n \to \infty} f(x_n,t_n) \geq f(x_0,t_0).
\end{equation*}
Likewise, we denote by
$\mbox{USC}(\mathbb R^N\times [0,T])$ the set of upper semi-continuous functions, i.e., all functions $f: \mathbb R^N\times [0,T]\to \R$ such that for all points $(x_0,t_0
)$ and for any sequence $\{(x_n,t_n)\}_n$,  $\lim _{n \to \infty} (x_n,t_n) \to (x_0,t_0)$ in $\mathbb R^N\times [0,T]$, we have
\begin{equation*}
\limsup _{n \to \infty}f(x_n,t_n) \leq f(x_0,t_0).
\end{equation*}
Note that a function $f$ is upper semi-continuous if and only if $-f$ is lower semi-continuous. Also, a real function $g$ is continuous if and only if it is both upper and lower semi-continuous. Given $\gamma\in [1,\infty)$, the function space $\mbox{LSC}_{p,\gamma}(\mathbb R^N\times [0,T])$ is defined to consist
of functions $h\in \mbox{LSC}(\mathbb R^N\times [0,T])$ which satisfy the growth condition
\begin{eqnarray} \label{eq:growth}
|h(x,t)| \leq  c(1+|x|^\gamma)
\end{eqnarray}
for some $c\in [1,\infty)$, whenever $(x,t)\in \mathbb R^N\times [0,T]$. $\mbox{USC}_{p,\gamma}(\mathbb R^N\times [0,T])$ is defined by analogy. Furthermore, $\mbox{C}_{p,\gamma}(\mathbb R^N\times [0,T])=\mbox{USC}_{p,\gamma}(\mathbb R^N\times [0,T])\cap \mbox{LSC}_{p,\gamma}(\mathbb R^N\times [0,T])$. $\mbox{C}_{p}(\mathbb R^N\times [0,T])$, $\mbox{USC}_{p}(\mathbb R^N\times [0,T])$, $\mbox{LSC}_{p}(\mathbb R^N\times [0,T])$ are the unions, respectively, of the sets $\mbox{C}_{p,\gamma}(\mathbb R^N\times [0,T])$, $\mbox{USC}_{p,\gamma}(\mathbb R^N\times [0,T])$, $\mbox{LSC}_{p,\gamma}(\mathbb R^N\times [0,T])$, with respect to $\gamma\geq 1$. By $C^{1,2}(\mathbb R^N\times [0,T))$ we denote the set of functions which are twice continuously differentiable in the spatial variables and once continuously differentiable in the time variable,  on $\mathbb R^N\times [0,T)$. We will by $c$ denote a generic constant, $1\leq c < \infty$, that may change value from line to line.

\subsection{Viscosity solutions} We here define the notion of viscosity solutions to the problem in \eqref{eq4+}.
Let $\mathcal{H}=\mathcal{L}+\mathcal{I}$ with $\mathcal{L}$, $\mathcal{I}$, as in \eqref{opera}, \eqref{operanloc}, respectively. Given
$(x,t)\in \mathbb R^N\times [0,T]$, $\phi\in C^{1,2}(\mathbb R^N\times [0,T))$, $p\in\mathbb R^N$, $\kappa\in (0,1)$, we let
\begin{eqnarray}\label{operanlocaa}
   \mathcal{I}_\kappa(x,t,\phi,p)&=&\int_{\{z: |z|<\kappa\}} K(x,t,z,\phi,p)\nu\left(dz\right),\notag\\
   \mathcal{I}^\kappa(x,t,\phi,p)&=&\int_{\{z: |z|\geq\kappa\}} K(x,t,z,\phi,p)\nu\left(dz\right),
\end{eqnarray}
where
\begin{eqnarray}\label{operanlocll}
   K(x,t,z,\phi,p)&=&\phi\left(x+\eta\left(x,t,z\right),t\right) - \phi\left(x,t\right)\notag\\
    &&- \chi_{\{|z| \leq 1\}}(z) \sum_{k=1}^N\eta_k\left(x,t,z\right)p_k.
\end{eqnarray} Note that
$\mathcal{I}=\mathcal{I}_\kappa+\mathcal{I}^\kappa$ and that $\mathcal{I}_\kappa$ and $\mathcal{I}^\kappa$ give, respectively,
the contribution to $\mathcal{I}$ from the `small' and `large' jumps. Using this notation we let $\mathcal{H}^\kappa(\phi,u)=\mathcal{H}^\kappa(x,t,\phi,u)$ where
\begin{eqnarray}\label{operanlocaa+}
\mathcal{H}^\kappa(x,t,\phi,u)&:=&\mathcal{L}\phi(x,t)+\mathcal{I}_\kappa(x,t,\phi,D\phi)+\mathcal{I}^\kappa(x,t,u,D\phi)
\end{eqnarray}
whenever $(x,t)\in \mathbb R^N\times [0,T]$, $\phi\in C^{1,2}(\mathbb R^N\times [0,T))$ and
$u\in\mbox{LSC}_p(\mathbb R^N\times [0,T])\cup \mbox{USC}_p(\mathbb R^N\times [0,T])$.

\begin{definition}\label{def:viscosity} $(i)$ A vector $(u_1^+,\dots,u_d^+)$, $u_i\in\mbox{LSC}_p(\mathbb R^N\times [0,T])$ for $i\in\{1,\dots,d\}$, is a viscosity supersolution to the problem in \eqref{eq4+} if $u_i^+(x,T)\geq g_i(x)$  whenever $x\in\mathbb R^N$, $i\in\{1,\dots,d\}$, and if the following holds. If $(x_0,t_0)\in\mathbb R^N\times (0,T)$ and if, for some $i\in\{1,\dots,d\}$, we have $\phi_i\in C^{1,2}(\mathbb R^N\times [0,T))$ such that
\begin{eqnarray*}
(i)&&\phi_i(x_0, t_0) = u_i^+(x_0,t_0),\notag\\
(ii)&&(x_0,t_0)\mbox{ is a global maximum of $\phi_i-u_i^+$},
\end{eqnarray*}
then
\begin{eqnarray*}
&&\min\biggl\{-\mathcal{H}^\kappa(\phi_i,u_i^+)-\psi_i, u_i^+-\max_{j\neq i}(-c_{i,j}+u_j^+)\biggr\} \geq 0\mbox{ for all }\kappa\in (0,1).
\end{eqnarray*}
$(ii)$ A vector $(u_1^-,\dots,u_d^-)$, $u_i^-\in\mbox{USC}_p(\mathbb R^N\times [0,T])$ for $i\in\{1,\dots,d\}$, is a viscosity subsolution to the problem in \eqref{eq4+} if $u_i^-(x,T)\leq g_i(x)$  whenever $x\in\mathbb R^N$, $i\in\{1,\dots,d\}$, and if the following holds. If $(x_0,t_0)\in\mathbb R^N\times (0,T)$ and if, for some $i\in\{1,\dots,d\}$, we have $\phi_i\in C^{1,2}(\mathbb R^N\times [0,T))$ such that
\begin{eqnarray*}
(i)&&\phi_i(x_0, t_0) = u_i^-(x_0,t_0),\notag\\
(ii)&&(x_0,t_0)\mbox{ is a global minimum of $\phi_i-u_i^-$},
\end{eqnarray*}
then
\begin{eqnarray*}
&&\min\biggl\{-\mathcal{H}^\kappa(\phi_i,u_i^-)-\psi_i, u_i^--\max_{j\neq i}(-c_{i,j}+u_j^-)\biggr\} \leq 0\mbox{ for all }\kappa\in (0,1).
\end{eqnarray*}
$(iii)$ If $(u_1,\dots,u_d)$ is both a viscosity  supersolution and subsolution to the problem in \eqref{eq4+}, then
 $(u_1,\dots,u_d)$ is a  viscosity solution to the problem in \eqref{eq4+}.
\end{definition}

\begin{remark} Note that it is natural, since the Levy measure is singular at $0$ and has some decay properties at infinity,  to break the non-local operator into the pieces $\mathcal{I}_\kappa$ and $\mathcal{I}^\kappa$ which give, respectively,
the contribution from the `small' and `large' jumps to $\mathcal{I}$. To have $\mathcal{I}_\kappa(x,t,v_i,D\phi_i)$  well defined the reasonable
things is to choose $v_i=\phi_i$ in Definition \ref{def:viscosity}. However, for $\mathcal{I}^\kappa(x,t,v_i,D\phi_i)$ to be well defined
the important thing is not regularity of $v_i$ but decay properties at infinity of $v_i$, decay properties which must be compatible with the decay properties of
the Levy measure $\nu$. Hence it is reasonable to use $\mathcal{I}^\kappa(x,t,v_i,D\phi_i)$ with $v_i=u_i^\pm$ in Definition \ref{def:viscosity} as long as
$u_i^\pm$ is assumed to have moderated growth at infinity. In our case polynomial growth is sufficient. We refer to \cite{JK06} for an elaboration on this definition and to \cite{BI08} for alternative equivalent definitions. 
\end{remark}

\begin{remark}\label{yya} In the following we simply write $\mathcal{I}_\kappa(\phi,p)=\mathcal{I}_\kappa(x,t,\phi,p)$, $\mathcal{I}^\kappa(\phi,p)=\mathcal{I}^\kappa(x,t,\phi,p)$. Note that
\begin{eqnarray}\label{rule1}
K(x,t,z,\phi^1+\phi^2,p^1+p^2)=K(x,t,z,\phi^1,p^1)+K(x,t,z,\phi^2,p^2)
\end{eqnarray}
and hence that
\begin{eqnarray}\label{rule2}
I_\kappa(\phi^1+\phi^2,p^1+p^2)&=&I_\kappa(\phi^1,p^1)+I_\kappa(\phi^2,p^2),\notag\\
I^\kappa(\phi^1+\phi^2,p^1+p^2)&=&I^\kappa(\phi^1,p^1)+I^\kappa(\phi^2,p^2).
\end{eqnarray}
\end{remark}

\setcounter{equation}{0} \setcounter{theorem}{0} \setcounter{definition}{0}
\section{The comparison principle:  proof of Theorem \ref{Thm1-}}\label{sec:compprinciple}
\noindent
The purpose of this section is to prove Theorem \ref{Thm1-} and hence through out the section we adopt the assumption stated in
Theorem \ref{Thm1-}. In particular, let $\mathcal{H}$ be as in \eqref{opera} and assume \eqref{assump1el} and \eqref{assump1elel}. Assume that  $\psi_i$, $c_{i,j}$, and $g_i$ are as stated in Theorem \ref{Thm1-} and assume that $(u_1^-,\dots,u_d^-)$ and $(u_1^+,\dots,u_d^+)$ are viscosity sub- and supersolutions, respectively, to the problem in \eqref{eq4+}. We first prove the following lemma.
\begin{lemma}
 \label{lemma:supersol}  The following is true
 for any $\gamma> 0$. Let $\theta\geq 0$. Then there exists $\eta>0$, independent of $\theta$, such that if $\lambda\geq \eta$,
 then $(\bar{u}_1^+,\dots,\bar{u}_d^+)$, $$\bar{u}_i^+:=u_i^+ + \theta e^{-\lambda t}(|x|^{2\gamma+2}+1)\mbox{ for $i\in\{1,\dots,d\}$,}$$ is a viscosity supersolution of \eqref{eq4+}.
\end{lemma}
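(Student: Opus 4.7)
The plan is to verify that adding the strictly positive smooth perturbation $w(x,t):=\theta e^{-\lambda t}(|x|^{2\gamma+2}+1)$ to each component $u_i^+$ preserves the three conditions of Definition \ref{def:viscosity}(i), provided $\lambda$ exceeds a threshold that depends only on the structural constants $A,\tilde A,\tilde B,N,\gamma$. The terminal inequality $\bar u_i^+(x,T)\geq g_i(x)$ is immediate because $w\geq 0$, and the obstacle condition is preserved because the same perturbation is added to every coordinate and hence cancels:
\begin{equation*}
\bar u_i^+ - \max_{j\neq i}(-c_{i,j}+\bar u_j^+) \;=\; u_i^+ - \max_{j\neq i}(-c_{i,j}+u_j^+)\;\geq\; 0.
\end{equation*}

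For the differential inequality I would take a test function $\bar\phi_i\in C^{1,2}(\mathbb R^N\times[0,T))$ with $(x_0,t_0)\in\mathbb R^N\times(0,T)$ a global maximum of $\bar\phi_i-\bar u_i^+$ at which $\bar\phi_i(x_0,t_0)=\bar u_i^+(x_0,t_0)$, and set $\phi_i:=\bar\phi_i-w$. Since $w\in C^{1,2}$, we have $\phi_i\in C^{1,2}$, and the identities $\phi_i-u_i^+=\bar\phi_i-\bar u_i^+$ and $\phi_i(x_0,t_0)=u_i^+(x_0,t_0)$ show that $\phi_i$ is an admissible test function for $u_i^+$ at $(x_0,t_0)$, so the supersolution property yields $-\mathcal{H}^\kappa(\phi_i,u_i^+)-\psi_i\geq 0$. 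Invoking the linearity recorded in Remark \ref{yya} (applied with the pairing $(\phi_i,D\phi_i)+(w,Dw)$ for the $\mathcal{I}_\kappa$ piece and $(u_i^+,D\phi_i)+(w,Dw)$ for the $\mathcal{I}^\kappa$ piece),
\begin{equation*}
\mathcal{H}^\kappa(\bar\phi_i,\bar u_i^+)\;=\;\mathcal{H}^\kappa(\phi_i,u_i^+)\,+\,\mathcal{L}w+\mathcal{I}_\kappa(w,Dw)+\mathcal{I}^\kappa(w,Dw)\;=\;\mathcal{H}^\kappa(\phi_i,u_i^+)+\mathcal{H}w,
\end{equation*}
so matters are reduced to producing $\eta>0$, independent of $\theta$ and of the test function, such that $\mathcal{H}w\leq 0$ on $\mathbb R^N\times[0,T]$ whenever $\lambda\geq\eta$.

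The remaining step is a pointwise computation of $\mathcal{H}w$. With $f(x):=|x|^{2\gamma+2}+1$, one has $\partial_t w=-\lambda w$, $|Df|\leq c(\gamma)(1+|x|^{2\gamma+1})$ and $|D^2 f|\leq c(\gamma)(1+|x|^{2\gamma})$, so the growth bounds $|a_i|,|\sigma_{i,j}|\leq A(1+|x|)$ from \eqref{assump1elel}$(ii)$ combine with the trivial inequality $1+|x|^k\leq 2 f(x)$ for $k\leq 2\gamma+2$ to give $|\mathcal{L}w-\partial_t w|\leq c_1 w$ for some $c_1=c_1(A,\gamma,N)$. For the non-local part I would split $\mathcal{I}w=\int_{|z|\leq 1}K\,\nu(dz)+\int_{|z|>1}K\,\nu(dz)$. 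On $\{|z|\leq 1\}$ the compensator $\chi_{\{|z|\leq 1\}}\eta\cdot Dw$ annihilates the first-order Taylor term, leaving a remainder bounded by $c\tilde B^2|z|^2(1+|x|^{2\gamma})\theta e^{-\lambda t}$ after using $|\eta|\leq\tilde B|z|$ and $|\xi|\leq|x|+\tilde B$ along the segment from $x$ to $x+\eta$; the first part of \eqref{assump1elelnloc} then bounds this contribution by $c_2' w$. On $\{|z|>1\}$, $|\eta|\leq\tilde B\sqrt{N}$ is uniformly bounded, so $|K|\leq cw$ with no appeal to the compensator, and the finiteness of $\nu(\{|z|>1\})$ implied by the exponential moment in \eqref{assump1elelnloc} controls this contribution by $c_2'' w$. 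Collecting, $\mathcal{H}w\leq(c_1+c_2-\lambda)w$, and $\eta:=c_1+c_2$ does the job.

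The main technical obstacle is the small-jump integral: one must both exploit the compensator to reduce the integrand to a quadratic quantity in $\eta$ (hence in $|z|$), and verify that the bound on $|D^2 w(\xi,t)|$ valid over segments of length at most $\tilde B$ grows only like $1+|x|^{2\gamma}$ rather than like $1+|x|^{2\gamma+2}$. This gap of two powers of $|x|$, together with the pointwise domination $1+|x|^{2\gamma}\leq 2 f(x)$, is what makes the final estimate linear in $w$ and keeps the resulting constants independent of $\theta$.
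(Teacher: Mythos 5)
Your proof is correct and follows essentially the same route as the paper's: subtract the perturbation $w=\hat\phi$ from the test function to test $u_i^+$, use the linearity recorded in Remark~\ref{yya} to isolate $\mathcal{H}w$, and then show $\mathcal{H}w\leq 0$ for $\lambda$ large via the growth bounds in \eqref{assump1elel} for the local part and the small/large-jump split with \eqref{assump1elelnloc}--\eqref{assump1elelnloc+} for the non-local part. You spell out the power-counting behind the bound $|K|\leq c|z|^2\hat\phi$ (namely that $D^2 w$ grows only like $|x|^{2\gamma}$ and is dominated by $f$), which the paper leaves implicit; otherwise the argument is identical.
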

\begin{proof} Since $u_i^+ \in \mbox{LSC}_p(\mathbb R^N\times [0,T])$ we have $\bar u_i^+ \in \mbox{LSC}_p(\mathbb R^N\times [0,T])$.  Let
$(x_0,t_0)\in\mathbb R^N\times [0,T]$ and assume, for some $i\in\{1,\dots,d\}$, that $\phi_i\in C^{1,2}(\mathbb R^N\times [0,T))$,  satisfies
\begin{eqnarray*}
(i)&&\phi_i(x_0, t_0) = \bar u_i^+(x_0,t_0),\notag\\
(ii)&&(x_0,t_0)\mbox{ is a global maximum of $\phi_i-\bar u_i^+$}.
\end{eqnarray*}
To prove the lemma it is enough to prove that there exists $\eta>0$, independent of $\theta$, such that if $\lambda\geq \eta$, then
\begin{eqnarray*}
\min\biggl\{-\mathcal{H}^\kappa(\phi_i,\bar u_i^+)-\psi_i, \bar u_i^+-\max_{j\neq i}(-c_{i,j}+\bar u_j^+)\biggr\} \geq 0\mbox{ for all }\kappa\in (0,1).
\end{eqnarray*}
Let $\Phi_i=\phi_i-\theta e^{-\lambda t}(|x|^{2\gamma+2}+1)$ and note that
by construction $\Phi_i-u_i^+$ has a global maximum at $(x_0,t_0)$. Using that $u_i^+$ is a supersolution we have that
\begin{eqnarray}\label{eq4++hha+a}
\min\biggl\{-\mathcal{H}^\kappa(\Phi_i,u_i^+)-\psi_i, u_i^+-\max_{j\neq i}(-c_{i,j}+ u_j^+)\biggr\} \geq 0
\end{eqnarray}
for all $\kappa\in (0,1)$. Using  \eqref{eq4++hha+a} we see that
$$\bar u_i^+-\max_{j\neq i}(-c_{i,j}+ \bar u_j^+)= u_i^+-\max_{j\neq i}(-c_{i,j}+ u_j^+) \geq 0$$
 since $\bar{u}_i^+-u_i^+$ is independent of $i$. To conclude the proof we hence only have to ensure that
\begin{equation}\label{eq4++hha+ab}
-\mathcal{H}^\kappa(\phi_i,\bar u_i^+)-\psi_i\geq 0\mbox{ at $(x_0,t_0)$}\mbox{ and for all }\kappa\in (0,1).
\end{equation}
Let $\hat\phi(x,t)=\theta e^{-\lambda t}(|x|^{2\gamma+2}+1)$. Then $\Phi_i=\phi_i-\hat\phi$, $\bar u_i^+=u_i^++\hat\phi$.
To establish \eqref{eq4++hha+ab}  we first note, using \eqref{eq4++hha+a}, that, at $(x_0,t_0)$,
\begin{eqnarray}\label{ikk1}
0\leq -\mathcal{H}^\kappa(\Phi_i,u_i^+)-\psi_i&=&-\mathcal{L}\Phi_i-\mathcal{I}_\k(\Phi_i,D\Phi_i)-\mathcal{I}^\k(u_i^+,D\Phi_i)-\psi_i.
\end{eqnarray}
However, using Remark \ref{yya} we see that
\begin{eqnarray}\label{ikk2}
\mathcal{I}_\kappa(\Phi_i,D\Phi_i)&=&\mathcal{I}_\k(\phi_i,D\phi_i)+\mathcal{I}_\k(-\hat\phi,-D\hat\phi),\notag\\
\mathcal{I}^\k(u_i^+,D\Phi_i)&=&\mathcal{I}^\k(\bar u_i^+,D\phi_i)+\mathcal{I}^\kappa (-\hat\phi,-D\hat\phi).
\end{eqnarray}
Hence, combining \eqref{ikk1} and \eqref{ikk2} we can conclude that
\begin{eqnarray}\label{ikk3}
0&\leq&-\mathcal{L}\phi_i-\mathcal{I}_\k(\phi_i,D\phi_i)-\mathcal{I}^\k(\bar u_i^+,D\phi_i)-\psi_i\notag\\
&&+ \mathcal{L}\hat\phi-\mathcal{I}_\k(-\hat\phi,-D\hat\phi)-\mathcal{I}_\k(-\hat\phi,-D\hat\phi)\notag\\
&=&-\mathcal{H}^\kappa(\phi_i,\bar u_i^+)-\psi_i+\mathcal{L}\hat\phi-\mathcal{I}(-\hat\phi,-D\hat\phi).
\end{eqnarray}
In particular, we have
\begin{eqnarray*}
-\mathcal{H}^\kappa(\phi_i,\bar u_i^+)-\psi_i\geq -\mathcal{L}\hat\phi+\mathcal{I}(-\hat\phi,-D\hat\phi).
\end{eqnarray*}
Using this we first note that
\begin{eqnarray*}
-\mathcal{L}(\hat\phi)\geq\lambda\theta e ^{- \lambda t} (|x|^{2 \gamma +2}+1) -\mathcal{F}(\theta e ^{-\lambda t}( |x|^{2 \gamma +2}+1))
\end{eqnarray*}
where
\begin{equation*}
\mathcal{F} := \sum_{i,j=1}^N a_{ij}(x,t)\p_{x_{i}x_{j}}+ \sum_{i=1}^N a_{i}(x,t)\p_{x_{i}}.
\end{equation*}
Using the assumption on the operator $\mathcal{H}$ stated in Theorem \ref{Thm1-} we see that
\begin{eqnarray*}
|\mathcal{F}(\theta e ^{-\lambda t}( |x|^{2 \gamma +2}+1))|\leq
c\theta e ^{-\lambda t}(|x|^{2 \gamma +2}+1)
\end{eqnarray*}
and hence
\begin{equation}\label{eq4++hha+abdf}
-\mathcal{L}(\hat\phi)\geq (\lambda-c)\hat\phi.
\end{equation}
To estimate $\mathcal{I}(-\hat\phi,-D\hat\phi)$ we first note, by definition, that
\begin{eqnarray}\label{operanlockk1}
   \mathcal{I}(x,t,-\hat\phi,-D\hat\phi)&=&-\int_{\mathbb R^l\setminus\{0\}} K(x,t,z,\hat\phi,D\hat \phi)\nu\left(dz\right),\notag\\
   K(x,t,z,\hat\phi,D\hat \phi)&=&\hat\phi\left(x+\eta\left(x,t,z\right),t\right) -\hat\phi\left(x,t\right)\notag\\
    &&- \chi_{\{|z| \leq 1\}}(z) \sum_{k=1}^N\eta_k\left(x,t,z\right)\partial_{x_k}\hat\phi.
\end{eqnarray}
Hence,
\begin{eqnarray*}\label{operanlockk2}
   |\mathcal{I}(x,t,-\hat\phi,-D\hat\phi)|&\leq &\int_{B(0,1)\setminus\{0\}} |K(x,t,z,\hat\phi,D\hat \phi)|\nu\left(dz\right),\notag\\
   &&+\int_{\mathbb R^l\setminus B(0,1)} |K(x,t,z,\hat\phi,D\hat \phi)|\nu\left(dz\right)\notag\\
   &=:& T_1+T_2.
\end{eqnarray*}
Now, using \eqref{assump1elelnloc+}, and Taylor's formula,  we first see that
\begin{align}\label{operanlockk3}
 |K(x,t,z,\hat\phi,D\hat \phi)| &\leq c |z|^2 \hat\phi(x,t)&\mbox{whenever }&z\in B(0,1)\setminus\{0\}, \notag \\
 |K(x,t,z,\hat\phi,D\hat \phi)| &\leq c  \hat\phi(x,t) &\mbox{whenever }&z\in \mathbb R^l\setminus B(0,1).
\end{align}
Hence, using this, \eqref{assump1elelnloc}, and \eqref{assump1elelnloc+} we can conclude that
\begin{eqnarray*} 
T_1\leq c \hat\phi(x,t) \, \mbox{ and } \, T_2\leq c\hat\phi(x,t).
\end{eqnarray*}
Putting the estimates together we can conclude, at $(x_0,t_0)$ and for all $\kappa\in (0,1)$, that
\begin{eqnarray}\label{fin1}
-\mathcal{H}^\kappa(\phi_i,\bar u_i^+)-\psi_i&\geq& -\mathcal{L}\hat\phi+\mathcal{I}(-\hat\phi,-D\hat\phi) \geq (\lambda-c)\hat\phi(x_0,t_0).
\end{eqnarray}
In view of \eqref{eq4++hha+ab} we see that \eqref{fin1} completes the proof of the lemma.\end{proof}

\subsection{Proof of Theorem \ref{Thm1-}}
Assume that $(u_1^+,\dots,u_d^+)$ is a viscosity supersolution and that $(u_1^-,\dots,u_d^-)$ is a
viscosity subsolution, respectively, to the problem in \eqref{eq4+}. We want to prove that
 \begin{eqnarray} \label{comp1}
 u_i^-(x,t)\leq u_i^+(x,t),\mbox{ for all } i\in\{1,\dots,d\},
\end{eqnarray}
whenever $(x,t)\in \mathbb R^N\times (0,T]$. In fact, we will prove a slightly modified version of \eqref{comp1}. We let $\tilde u_i^-(x,t)=e^tu_i^-(x,t)$ and $\tilde u_i^+(x,t)=e^t\bar u_i^+(x,t)$
for all  $i\in\{1,\dots,d\}$. One can easily verify that $(\tilde u_1^-,\dots,\tilde u_d^-)$ is a viscosity subsolution to the problem
\begin{eqnarray} \label{eq:systemgamma}
&&\min\left \{\tilde u_i^- -\mathcal{H}\tilde u_i^-(x,t)- \tilde \psi_i(x,t),\tilde u_i^-(x,t)-\max_{j\neq i}(-\tilde c_{i,j}(x,t)+\tilde u_j^-(x,t))\right \}=0,\notag\\
&&\ \tilde u_i^-(x,T)=\tilde g_i(x),
\end{eqnarray}
where
$$\tilde\psi_i(x,t)=e^t\psi_i(x,t),\ \tilde c_{i,j}(x,t)=e^tc_{i,j}(x,t),\ \tilde g_i(x)=e^T g_i(x),
$$
if and only if $(u_1^-,\dots,u_d^-)$ is a subsolution to \eqref{eq4+}. Similarly,
$(\tilde u_1^+,\dots,\tilde u_d^+)$ is a viscosity supersolution to the problem in \eqref{eq:systemgamma} if and only if
 $(u_1^+,\dots,u_d^+)$ is a supersolution to \eqref{eq4+}. Proving \eqref{comp1} is now equivalent to proving
 \begin{eqnarray} \label{comp2+}
 \tilde u_i^-(x,t)\leq \tilde u_i^+(x,t),\mbox{ for all } i\in\{1,\dots,d\},
\end{eqnarray}
whenever $(x,t)\in \mathbb R^N\times (0,T]$. However, according to Lemma \ref{lemma:supersol} it is enough to prove
 \begin{eqnarray}\label{comp2-}
 \tilde u_i^-(x,t)\leq \bar u_i^+(x,t),\mbox{ for all } i\in\{1,\dots,d\},
\end{eqnarray}
whenever $(x,t)\in \mathbb R^N\times (0,T]$, where $\bar{u}_i^+:=\tilde u_i^+ + \theta e^{-(\lambda-1) t}(|x|^{2\gamma+2}+1)$, $\gamma >0$,  since we easily recover \eqref{comp2+} by letting $\theta\to 0$ in  \eqref{comp2-}. In fact, it is enough to prove that
\begin{equation}\label{comp2}
\tilde u_i^-(x,t)\leq \bar u_i^+(x,t) + \frac \theta t,\mbox{ for all } i\in\{1,\dots,d\},
\end{equation}
whenever $(x,t)\in\mathbb R^N\times(0,T]$ and for any $\theta >0$, since the desired result is still retrieved in the limit as $\theta \to 0$. In particular, we will prove \eqref{comp2} for $\theta$ fixed and then let $\theta\to 0$.

Let in the following $B(0,R)$, $R>0$, be the standard Euclidean ball of radius $R$ centered at 0.  Using that
$u_i^-\in\mbox{USC}_{p}(\mathbb R^N\times [0,T])$ for $i\in\{1,\dots,d\}$, we have that there exists $\gamma_0 \geq \frac{1}{2}$ such that $|\tilde u_i^-(x,t)|\leq c(1+|x|^{2\gamma_0})$. We now fix this $\gamma_0$ and plug it into our defintion of $\bar u_i^+(x,t)$ from above. With slight abuse of notation, we will drop the subscript from $\gamma_0$ and only write $\gamma$ and let $\bar u_i^+(x,t)$ from here on in denote $\tilde u_i^+ + \theta e^{-(\lambda-1) t}(|x|^{2\gamma+2}+1)$ with  $\gamma=\gamma_0$ fixed. Using this $\tilde u_i^+$ we see that there exists $R>0$ such that
 \begin{eqnarray} \label{comp4}
\tilde u_i^-(x,t)-\bar u_i^+(x,t) <0\mbox{ whenever $(x,t)\in (\mathbb R^N\setminus B(0,R))\times[0,T]$}.
\end{eqnarray}
From \eqref{comp4} we see that
 \begin{eqnarray} \label{comp5}
&&\sup_{(x,t)\in \mathbb R^N\times[0,T]}\max_{i\in\{1,\dots,d\}}((\tilde u_i^-(x,t)-\bar u_i^+(x,t) - \frac \theta t)\notag\\
&=&\sup_{(x,t)\in B(0,R)\times[0,T]}\max_{i\in\{1,\dots,d\}}((\tilde u_i^-(x,t)-\bar u_i^+(x,t)- \frac \theta t)\notag\\
&=&\max_{i\in\{1,\dots,d\}}((\tilde u_i^-(\bar  x,\bar  t)-\bar u_i^+(\bar  x,\bar  t) - \frac {\theta} {\bar t})
\end{eqnarray}
for some $(\bar  x,\bar  t)\in B(0,R)\times(0,T]$. We will now prove \eqref{comp2} by contradiction. Indeed,  assume that
 \begin{equation} \label{comp3}
 \max_{i\in\{1,\dots,d\}}(\tilde u_i^-(\bar x,\bar t)-\bar u_i^+(\bar x,\bar t)- \frac {\theta} {\bar t})>0.
\end{equation}
Using that $\tilde u_i^-(x,T)\leq \bar u_i^+(x,T)$, for all $x\in\mathbb R^N$,  by the definition of sub- and supersolution, we see that
$(\bar  x,\bar  t)\in B(0,R)\times(0,T)$. For $(\bar  x,\bar  t)\in B(0,R)\times(0,T)$ fixed, we let $\mathcal{I}$ be the non-empty set of
all $j \in\{1,\dots,d\}$ such that
\begin{equation*}
 (\tilde u_j^-(\bar  x,\bar  t) -\bar u_j^+(\bar  x,\bar  t)- \frac{\theta}{\bar t})=\max_{i\in\{1,\dots,d\}}(\tilde u_i^-(\bar  x,\bar  t)-\bar u_i^+(\bar  x,\bar  t) -\frac{\theta}{\bar t}).
\end{equation*}
For $\gamma=1$ and for given degrees of freedom $\beta>0$, $\Lambda> 0$, we introduce the function $\varphi_\e: \R^N \times \R^N \times [0,T] \to  \R$,
\begin{equation*} 
\varphi_\e(x,y,t) = \frac{1}{2\e}|x-y|^{2} + \Lambda  (|x-\bar  {x}|^{2\gamma +2} +|y-\bar  {x}|^{2\gamma +2}) + \beta(t-\bar  {t})^2.
\end{equation*}
Note that $\varphi_\e$ is non-negative.  Furthermore, for $j\in \mathcal{I}$ fixed, and $\e$, $0<\e\ll 1$,  we consider the function
\begin{equation*} \label{comp7}
\phi^j_\e(x,y,t) = \tilde u_j^-(x,t) - \bar u_j^+(y,t) - \frac{\theta}{t}- \varphi_\e(x,y,t),
\end{equation*}
where $(x,y,t) \in  B(0,R) \times B(0,R)\times [0,T]$. Using that $\tilde u_j^-$ is upper semi-continuous and that
$\bar u_j^+$ is lower semi-continuous we can conclude that $\phi^j_\e$ is upper semi-continuous and hence that
there exist $(x_\e, y_\e,t_\e)$ such that
\begin{equation}\label{comp9}
\phi_\e ^j (x_\e, y_\e, t_\e) = \sup_{(x,y,t) \in  \overline{B(0,R)} \times \overline{B(0,R)} \times [0,T]} \phi^j_\e(x,y,t).
\end{equation}
Note that the points $(x_\e, y_\e, t_\e)$ depend also on $\beta$ and $\Lambda$. However, in this part of the argument
$\beta$ and $\Lambda$ are kept fixed and hence the dependence is harmless in the following.
Using that  $2\phi^j(x_\e,y_\e,t_\e) \geq \phi^j(x_\e,x_\e,t_\e)+\phi^j(y_\e,y_\e,t_\e)$ we see that
\begin{equation}\label{comp10}
\frac{1}{\e} |x_\e-y_\e|^{2}  \leq \tilde u_j^-(x_\e,t_\e) - \tilde u_j^-(y_\e,t_\e) + \bar u_j^+(x_\e,t_\e) - \bar u_j^+(y_\e,t_\e),
\end{equation}
and as the right hand side of \eqref{comp10} is bounded we have that $|x_\e-y_\e|\to 0$ as $\e\to 0$.
Using that
$(\bar  x,\bar  t)\in \overline{B(0,R)}\times(0,T)$, and the construction of $\varphi _\e$, we see that
\begin{eqnarray}\label{ffa}
\tilde u_j^-(\bar{x},\bar{t})-\bar u_j^+(\bar{x},\bar{t}) -\frac{\theta}{\bar t} &=& \phi^j_\e (\bar{x},\bar{x},\bar{t}) \notag\\
&\leq &\phi^j_\e(x_\e , y_\e,t_\e)\leq \tilde u_j^-(x_\e,t_\e)-\bar u_j^+(y_\e,t_\e) -\frac{\theta}{t_\e}.
\end{eqnarray}
Furthermore, we see that we must have, using the definition of $(\bar  x,\bar  t)$, $(x_\e , y_\e,t_\e)$, and the upper semi-continuity of
$\tilde u^-_j-\bar u^+_j$, that
\begin{equation}\label{ffa1}
(x_\e,y_\e,t_\e) \to (\bar{x},\bar{x},\bar t)\mbox{ as $\e\to 0$}.
\end{equation}
The above display also shows that, for $\e$ small enough, we have $t_\e \in (0,T)$ since $t_\e \to \bar t$ and $\bar t  \in (0,T)$.
Note also that
\begin{equation}\label{eq:tbarxbarconv}
\tilde u_j^-(x_\e,t_\e) \to \tilde u_j^-(\bar{x},\bar t)\mbox{ and } \bar u_j^+(y_\e,t_\e) \to \bar  u_j^+(\bar{x},\bar t)\mbox{ as $\e\to 0$}.
\end{equation}
Indeed, recall that $\tilde u_j^-$ is  upper semi-continuous and assume, taking \eqref{ffa1} into account, that $\limsup_{\e\to 0}\tilde u_j^-(x_\e,t_\e) < \tilde u_j^-(\bar{x},\bar t)$. Then, using
\eqref{ffa} we have that $\liminf_{\e\to 0}\bar u_j^+(y_\e,t_\e) < \bar u_j^+(\bar{x},\bar{t})$ but this contradicts the lower semi-continuity for $\tilde u_j^+$. Similarly, assuming that $\liminf_{\e\to 0}\bar u_j^+(y_\e,t_\e) >\bar u_j^+(\bar{x},\bar{t})$ we see that
$$\limsup_{\e\to 0}\tilde u_j^-(x_\e,t_\e) > \tilde u_j^-(\bar{x},\bar t),$$
which again is a contradiction. Repeating \eqref{ffa} we also have that
\begin{eqnarray}\label{ffahha}
&&\tilde u_j^-(\bar{x},\bar{t})-\bar u_j^+(\bar{x},\bar{t}) - \frac{\theta}{\bar t} = \phi^j_\e (\bar{x},\bar{x},\bar{t})\notag\\
&\leq& \phi^j_\e(x_\e , y_\e,t_\e) = \tilde u_j^-(x_\e,t_\e)-\bar u_j^+(y_\e,t_\e)-\varphi_\e(x_\e , y_\e,t_\e) -\frac{\theta}{t_\e}.
\end{eqnarray}
In particular,
\begin{eqnarray}\label{ffahha+}
\varphi_\e(x_\e , y_\e,t_\e)&\leq& \tilde u_j^-(x_\e,t_\e)-\tilde u_j^-(\bar{x},\bar{t})\notag\\
&&+\bar u_j^+(\bar{x},\bar{t})-\bar u_j^+(y_\e,t_\e) + \frac{\theta}{\bar t} -\frac{\theta}{t_\e}
\end{eqnarray}
and using \eqref{eq:tbarxbarconv} we see that
\begin{eqnarray}\label{ffahha+a}
\lim_{\e\to 0}\varphi_\e(x_\e , y_\e,t_\e)=0.
\end{eqnarray}
In particular,
\begin{equation}\label{ffahha+any}
\frac{1}{\e} |x_\e-y_\e|^{2}  \to 0\mbox{ as $\e \to 0$.}
\end{equation}

To proceed we will now argue as in \cite{IK91}, using the no-loop condition \eqref{assump3} $(ii)$, to conclude that
there exists $k \in \mathcal{I}$ such that
\begin{equation}
\label{eq:greaterclaim}
\tilde u_k^-(\bar{x},\bar{t}) > \max_{i \in \{1,\dots,k-1,k+1,\dots,d\}} (\tilde u_i^-(\bar{x},\bar{t}) -\tilde c_{k,i}(\bar{x}, \bar{t})).
\end{equation}
Indeed, assume, on the contrary, that
$$
\tilde u_k^-(\bar{x},\bar{t}) \leq \max_{i \in \{1,\dots,k-1,k+1,\dots,d\}} (\tilde u_i^-(\bar{x},\bar{t}) -\tilde c_{k,i}(\bar{x}, \bar{t}))
$$
for all $k\in \mathcal{I}$ and hence, in particular, that
$$\tilde u_k^-(\bar{x},\bar{t}) +  \tilde c_{k,j}(\bar{x}, \bar{t}) \leq \tilde u_j^-(\bar{x},\bar{t})$$
for some $j \in \{1,\dots,k-1,k+1,\dots,d\}$. Furthermore, since $(\bar u_1^+,\dots,\bar u_d^+)$ is a supersolution to  \eqref{eq:systemgamma} we have that
\begin{equation*}
\bar u_k^+(\bar{x},\bar{t}) \geq  \bar u_j^+(\bar{x},\bar{t})- \tilde c_{k,j}(\bar{x},\bar{t}).
\end{equation*}
Combining the two inequalities above yields
\begin{equation}
\tilde u_k^-(\bar{x},\bar{t}) -\tilde u_j^-(\bar{x},\bar{t}) \leq -  \tilde c_{k,j}(\bar{x}, \bar{t}) \leq \bar u_k^+(\bar{x},\bar{t}) -  \bar u_j^+(\bar{x},\bar{t}),
\end{equation}
and hence
\begin{equation}
\label{eq:switchequal+}
\tilde u_k^-(\bar{x},\bar{t}) - \bar u_k^+(\bar{x},\bar{t})  - \frac {\theta}{\bar t}\leq  \tilde u_j^-(\bar{x},\bar{t}) -  \bar u_j^+(\bar{x},\bar{t}) - \frac {\theta}{\bar t}.
\end{equation}
But $k\in \mathcal{I}$ so \eqref{eq:switchequal+} is actually an equality and hence $j\in \mathcal {I}$. Repeating this argument as many times as necessary we get the existence of a loop of indices $\{i_1, i_2, \dots,i_p,i_{p+1}\}$ such that $i_1=i_{p+1}$ and
\begin{equation}
\tilde c_{i_1,i_2}+\tilde c_{i_2,i_3} + \dots + \tilde c_{i_p,i_{p+1}} =0.
\end{equation}
This contradicts our assumptions on the switching costs and hence \eqref{eq:greaterclaim} must hold.

 We now consider $k\in \mathcal{I}$ such that \eqref{eq:greaterclaim} holds and we intend to derive a contradiction to the assumption in
 \eqref{comp3}. First, using \eqref{eq:tbarxbarconv} and \eqref{eq:greaterclaim} we see that there exists $\bar\e$, $0<\bar\e\ll 1$, such that
 \begin{equation}\label{appa}
\tilde u_k^-(x_\e, t_\e) > \max_{i \in \{1,\dots,k-1,k+1,\dots,d\}} (\tilde u_i^-(x_\e,t_\e) - \tilde c_{k,i}(x_\e,t_\e))\mbox{ when $0<\e<\bar\e$}.
\end{equation}
\eqref{appa} ensures that $\tilde u_k^-$ is above the obstacle at the points $\{(x_\e,t_\e)\}_{\e<\bar\e}$.  We next intend to apply the so called maximum principle for semi-continuous functions, in our case adapted to the non-local system, in a neighborhood of $(x_\e, y_\e,t_\e)$ and
 for $\tilde u_k^-$, and along the lines of \cite{BI08}. To do this we first have to calculate $\partial_t\varphi_\e$, $\partial_{x_i}\varphi_\e$, $\partial_{y_i}\varphi_\e$ and $\partial_{x_iy_j}\varphi_\e$.  Doing this we see that
\begin{eqnarray}\label{eq:derivatives}
&&\partial_t\varphi_\e(x,y,t)=2 \beta(t- \bar{t}), \notag \\
&&D_x\varphi_\e(x,y,t)= \frac{1}{\e}(x-y)+  \Lambda (2\gamma +2) (x-\bar{x}) |x-\bar{x}|^{2\gamma}, \notag\\
&&D_y\varphi_\e(x,y,t) =-\frac{1}{\e}(x-y)+ \Lambda (2\gamma +2) (y-\bar{x}) |y-\bar{x}|^{2\gamma}.
\end{eqnarray}
Furthermore,
\begin{eqnarray}\label{eq:derivatives+}
D^2_{x,y}\varphi_\e (x,y)= \frac{1}{\e} \left( \begin{array}{cc} I & -I \\ -I & I  \end{array} \right) + \left(\begin{array}{cc} l(x) & 0 \\ 0 & l(y)  \end{array}\right)
\end{eqnarray}
where
\begin{eqnarray*}
l(x)&=& \Lambda(2\gamma+2) |x-\bar{x}|^{2\gamma} I +2\gamma\Lambda (2\gamma+2)\langle x-\bar{x},x-\bar{x}\rangle|x-\bar{x}|^{2\gamma-2}.
\end{eqnarray*}
Let $S_N$ be the set of all $N\times N$-dimensional symmetric matrices
 and let $ \varphi_\e^\alpha$ be the $\sup$-convolution of $\varphi_\e$ as defined in \cite{BI08}.
We may now apply Lemma $1$ of \cite{BI08} (more precisely Corollary 2) to conclude that for any $\kappa >0$ there exists $\bar \alpha (\kappa)$ such that for $0 < \alpha \leq \bar{\alpha}$ there exist $C,D \in \R$ and $X,Y \in S_N$, such that
\begin{eqnarray*}
C+D = 2 \beta(t_\e -\bar{t}) - \frac {\theta}{t_\e^2},
\end{eqnarray*}
\begin{eqnarray}\label{eq:hessianest+}
\left(\begin{array}{cc}
X & 0 \\
0 & -Y
\end{array} \right)\leq 
D^2_{x,y}\varphi_\e(x_\e,y_\e) + o_{\alpha}(1)\left(\begin{array}{cc} I & 0 \\ 0 & I \end{array}\right),
\end{eqnarray}
and such that 
\begin{eqnarray}\label{subb}
&&-C + \tilde u_k^-(x_\e,t_\e) -\sum_{i,j=1}^N a_{ij}(x_\e,t_\e)X_{i,j} -\sum_{i=1}^N a_i(x_\e,t_\e)\partial_{x_i}\varphi_\e(x_\e,y_\e,t_\e)\notag\\
 &&- \mathcal{I}_\kappa(x_\e,t_\e,\varphi^\alpha_\e(\cdot,y_\e,t_\e),D_x\varphi_\e)\notag\\
 &&-\mathcal{I}^\kappa(x_\e,t_\e,\tilde u_k^-(\cdot,t_\e),D_x\varphi_\e)
 -\tilde \psi_k(x_\e,t_\e) \leq 0,
\end{eqnarray}
and
\begin{eqnarray}\label{subb+}
&&D +\bar u_k^+(y_\e, t_\e) -\sum_{i,j=1}^N a_{ij}(y_\e,t_\e)Y_{i,j}+\sum_{i=1}^N a_i(y_\e,t_\e)\partial_{y_i}\varphi_\e(x_\e,y_\e,t_\e)\notag\\
 &&- \mathcal{I}_\kappa(y_\e,t_\e,-\varphi^\alpha_\e(x_\e,\cdot,t_\e),-D_y\varphi_\e)\notag\\
 &&-\mathcal{I}^\kappa(y_\e,t_\e,\bar u_k^+(\cdot,t_\e),-D_y\varphi_\e)-\tilde \psi_k(y_\e,t_\e) \geq 0.
\end{eqnarray}
In the above, we have used little o notation, i.e., $o_\alpha(1) \to 0$ as $\alpha \to 0$. Adding \eqref{subb} and \eqref{subb+} we see that
\begin{eqnarray}\label{subb++}
&&-C-D + \tilde u_k^-(x_\e,t_\e)-\bar u_k^+(y_\e, t_\e) -\sum_{i,j=1}^N (a_{ij}(x_\e,t_\e)X_{i,j}-a_{ij}(y_\e,t_\e)Y_{i,j})\notag\\
&&-\sum_{i=1}^N (a_i(x_\e,t_\e)\partial_{x_i}\varphi_\e(x_\e,y_\e,t_\e)+a_i(y_\e,t_\e)\partial_{y_i}\varphi_\e(x_\e,y_\e,t_\e))\notag\\
&&-(\tilde \psi_k(x_\e,t_\e) - \tilde \psi_k(y_\e,t_\e)) \notag\\
&&-\left (\mathcal{I}_\kappa(x_\e,t_\e,\varphi^\alpha_\e(\cdot,y_\e,t_\e),D_x\varphi_\e)
-\mathcal{I}_\kappa(x_\e,t_\e,-\varphi^\alpha_\e(x_\e,\cdot,t_\e),-D_y\varphi_\e)\right )\notag\\
&&-\left(\mathcal{I}^\kappa(y_\e,t_\e,\tilde u_k^-(\cdot,t_\e),D_x\varphi_\e)-\mathcal{I}^\kappa(y_\e,t_\e,\bar u_k^+(\cdot,t_\e),-D_y\varphi_\e) \right)\leq 0.
\end{eqnarray}
Now, using standard arguments based on assumptions \eqref{assump1el} and \eqref{assump1elel} it follows that
\begin{eqnarray*}
&&\left |\sum_{i=1}^N \left (a_i(x_\e,t_\e)\partial_{x_i}\varphi_\e(x_\e,y_\e,t_\e)+a_i(y_\e,t_\e)\partial_{y_i}\varphi_\e(x_\e,y_\e,t_\e) \right) \right|\notag\\
&\leq& c \bigg  (\frac 1\e |x_\e-y_\e|^{2}+\Lambda \left  (1+|x_\e||x_\e-\bar{x}|^{2\gamma+1}+|y_\e| |y_\e-\bar{x}|^{2\gamma+1} \right)\bigg  )\end{eqnarray*}
and
\begin{eqnarray*}
&&\left |\sum_{i,j=1}^N \left (a_{ij}(x_\e,t_\e)X_{i,j} - a_{ij}(y_\e,t_\e)Y_{i,j} \right ) \right | \notag \\
 &\leq& c \bigg (\frac{1}{\e}|x_\e-y_\e|^{2}  +\Lambda  (1+|x_\e|^2|x_\e-\bar x|^{2 \gamma} + |y_\e|^2|y_\e- \bar x|^{2\gamma }) \bigg ).\notag
\end{eqnarray*}
Putting these estimates together we find that
\begin{eqnarray*}
&&-C -D + \tilde u_k^-(x_\e,t_\e) -\bar u_k^+(y_\e, t_\e)\notag\\
&&-(\mathcal{I}_\kappa(x_\e,t_\e,\varphi^\alpha_\e(\cdot,y_\e,t_\e),D_x\varphi_\e)
-\mathcal{I}_\kappa(x_\e,t_\e,-\varphi^\alpha_\e(x_\e,\cdot,t_\e),-D_y\varphi_\e))\notag\\
&&-(\mathcal{I}^\kappa(y_\e,t_\e,\tilde u_k^-(\cdot,t_\e),D_x\varphi_\e)-\mathcal{I}^\kappa(y_\e,t_\e,\bar u_k^+(\cdot,t_\e),-D_y\varphi_\e))\notag\\
&\leq&  \frac{c}{\e}|x_\e-y_\e|^{2}+
c \Lambda h(x_\e, y_\e,\bar x,\bar y)+ \tilde \psi_k(x_\e,t_\e) - \tilde \psi_k(y_\e,t_\e)
\end{eqnarray*}
where $0\leq h(x_\e, y_\e,\bar x,\bar y)\leq c$ for all $\e$, $0<\e\leq\bar\e$.
Hence, using the relation for $C+D$, see \eqref{eq:hessianest+} we see that
\begin{eqnarray*}
&&\tilde u_k^-(x_\e,t_\e) -\bar  u_k^+(y_\e, t_\e)-2 \beta(t_\e -\bar{t}) + \frac{\theta}{t_\e^2}\notag\\
&&-(\mathcal{I}_\kappa(x_\e,t_\e,\varphi^\alpha_\e(\cdot,y_\e,t_\e),D_x\varphi_\e)
-\mathcal{I}_\kappa(x_\e,t_\e,-\varphi^\alpha_\e(x_\e,\cdot,t_\e),-D_y\varphi_\e))\notag\\
&&-(\mathcal{I}^\kappa(y_\e,t_\e,\tilde u_k^-(\cdot,t_\e),D_x\varphi_\e)-\mathcal{I}^\kappa(y_\e,t_\e,\bar u_k^+(\cdot,t_\e),-D_y\varphi_\e))\notag\\
&\leq&  \frac{c}{\e}|x_\e-y_\e|^{2}+
c \Lambda h(x_\e, y_\e,\bar x,\bar y)+ \tilde \psi_k(x_\e,t_\e) - \tilde \psi_k(y_\e,t_\e).
\end{eqnarray*}
We now need to estimate the expressions involving the non-local operators and we intend to prove that
 \begin{eqnarray}\label{estnloc}
&&\biggl |-(\mathcal{I}_\kappa(x_\e,t_\e,\varphi^\alpha_\e(\cdot,y_\e,t_\e),D_x\varphi_\e)
-\mathcal{I}_\kappa(x_\e,t_\e,-\varphi^\alpha_\e(x_\e,\cdot,t_\e),-D_y\varphi_\e))\notag\\
&&-(\mathcal{I}^\kappa(y_\e,t_\e,\tilde u_k^-(\cdot,t_\e),D_x\varphi_\e)-\mathcal{I}^\kappa(y_\e,t_\e,\bar u_k^+(\cdot,t_\e),-D_y\varphi_\e))\biggr |\notag\\
&\leq&  \frac{c}{\e}|x_\e-y_\e|^{2}+
c \Lambda \hat h(x_\e, y_\e,\bar x,\bar y) +\frac {f(\kappa)}\e + o_\alpha(1),
\end{eqnarray}
where $0\leq \hat h(x_\e, y_\e,\bar x,\bar y)\leq c$ for all $\e$, $0<\e\leq\bar\e$,
and for some non-negative function $f(\kappa)$ such that  $f(\kappa)\to 0$ as $\kappa\to 0$. Note also that the points $\{(x_\e,y_\e,t_\e)\}_{\e<\bar\e}$ are independent of $\kappa$ and that the function $f(\kappa)$ is independent of $\e$. Assuming that \eqref{estnloc} holds and combining the estimates in the last displays we see that
\begin{eqnarray*}
&&\tilde u_k^-(x_\e,t_\e) -\bar  u_k^+(y_\e, t_\e) + \frac{\theta}{t_\e^2}\notag\\
&\leq&2 \beta(t_\e -\bar{t})+ \frac{c}{\e}|x_\e-y_\e|^{2}+
c \Lambda h(x_\e, y_\e,\bar x,\bar y)+
c \Lambda \hat h(x_\e, y_\e,\bar x,\bar y)\notag\\
&&+ \tilde \psi_k(x_\e,t_\e) - \tilde \psi_k(y_\e,t_\e)+\frac {f(\kappa)}\e + o_{\alpha}(1).
\end{eqnarray*}
Now, letting first $\alpha \to 0$ and then $\kappa\to 0$ we get
\begin{eqnarray*}
&&\tilde u_k^-(x_\e,t_\e) -\bar  u_k^+(y_\e, t_\e) + \frac{\theta}{t_\e^2}\notag\\
&\leq&2 \beta(t_\e -\bar{t})+ \frac{c}{\e}|x_\e-y_\e|^{2}+
c \Lambda h(x_\e, y_\e,\bar x,\bar y)+
c \Lambda \hat h(x_\e, y_\e,\bar x,\bar y)\notag\\
&&+ \tilde \psi_k(x_\e,t_\e) - \tilde \psi_k(y_\e,t_\e).
\end{eqnarray*}
Then, letting  $\e\to 0$, using \eqref{ffa1}, \eqref{ffahha+any} and the continuity of $\tilde \psi_k$, we can conclude that
\begin{eqnarray*}
&&\tilde u_k^-(\bar{x},\bar t) -\bar u_k^+(\bar{x},\bar t) + \frac {\theta}{\bar t^2}\leq \tilde c\Lambda.
\end{eqnarray*}
Finally,  letting $\Lambda \to 0$ in the last display we can conclude that
\begin{equation}
  \tilde u_k^-(\bar{x},\bar t) -\bar u_k^+(\bar{x},\bar t) + \frac {\theta}{\bar t^2} \leq 0,
\end{equation}
which contradicts \eqref{comp3}.

To finish the proof we now only have to prove the estimate in \eqref{estnloc}. However, by following section 5 of \cite{BI08}, we see that we may replace $\mathcal{I}_\kappa(x_\e,t_\e,\varphi^\alpha_\e(\cdot,y_\e,t_\e),D_x\varphi_\e)$ by $\mathcal{I}_\kappa(x_\e,t_\e,\varphi_\e(\cdot,y_\e,t_\e),D_x\varphi_\e) + o_{\alpha}(1)$ and hence it suffices to prove \eqref{estnloc} with $\varphi^\alpha_\e$ replaced by $\varphi_\e$. To do this we first see, using Taylor's formula, the definition of $\varphi_\e$, \eqref{assump1elelnloc} and \eqref{assump1elelnloc+}, that
\begin{align}\label{eq:estkappa0}
&|(\mathcal{I}_\kappa(x_\e,t_\e,\varphi_\e(\cdot,y_\e,t_\e),D_x\varphi_\e) \notag \\
-&\mathcal{I}_\kappa(y_\e,t_\e,-\varphi_\e(x_\e,\cdot,t_\e),-D_y\varphi_\e))|\leq \frac {f(\kappa)}\e
\end{align}
for all $\e$, $0<\e\leq\bar\e$, and for some non-negative function $f(\kappa)$ as above.
By the dominated convergence theorem we also have $f(\kappa)\to 0$ as $\kappa\to 0$ since
 \begin{eqnarray*}\lim _{\kappa \to 0} \int \limits_ {B(0,\kappa)} |z|^2 \nu(dz) &=& \lim _{\kappa \to 0 } \int \limits_ {B(0,1)} |z|^2 \chi_{\{|z|\leq \kappa\}}\nu(dz)\notag\\
 & =& \int \limits_ {B(0,1)} \lim _{\kappa \to 0} \left(|z|^2 \chi_{\{|z|\leq \kappa\}} \right )\nu(dz) =0.
 \end{eqnarray*}
Hence, only the term involving $\mathcal{I}^\kappa$ remains. 
To conduct estimates we first decompose
\begin{eqnarray*}
&&-(\mathcal{I}^\kappa(x_\e,t_\e,\tilde u_k^-(\cdot,t_\e),D_x\varphi_\e)-\mathcal{I}^\kappa(y_\e,t_\e,\bar u_k^+(\cdot,t_\e),-D_y\varphi_\e))\notag\\
&=&-(\mathcal{I}_-^\kappa(x_\e,t_\e,\tilde u_k^-(\cdot,t_\e),D_x\varphi_\e)-\mathcal{I}_-^\kappa(y_\e,t_\e,\bar u_k^+(\cdot,t_\e),-D_y\varphi_\e))\notag\\
&&-(\mathcal{I}^\kappa_+(x_\e,t_\e,\tilde u_k^-(\cdot,t_\e),D_x\varphi_\e)-\mathcal{I}_+^\kappa(y_\e,t_\e,\bar u_k^+(\cdot,t_\e),-D_y\varphi_\e))
\end{eqnarray*}
where $\mathcal{I}_-^\kappa$ indicates integration with respect to $z\in B(0,1)\setminus B(0,\kappa)$ and $\mathcal{I}_+^\kappa$ indicates integration
 with respect to $z\in \mathbb R^l\setminus B(0,1)$.  Note that by construction, see \eqref{comp9}, we can, working with index $k$, assume that the sequence $\{(x_\e,y_\e,t_\e)\}_{\e<\bar\e}$ is such that
\begin{eqnarray}\label{comp9uu}
\phi_\e^k (x_\e, y_\e, t_\e) &=& \sup_{(x,y,t) \in  \overline{B(0,R)} \times \overline{B(0,R)} \times (0,T)} \phi^k_\e(x,y,t),\mbox{ where}\notag\\
\phi^k_\e(x,y,t) &=& \tilde u_k^-(x,t) - \bar u_k^+(y,t) - \frac{\theta}{t}- \varphi_\e(x,y,t).
\end{eqnarray}
To proceed we exploit that the maximizing property of the sequence $\{(x_\e,y_\e,t_\e)\}_{\e<\bar\e}$ in \eqref{comp9uu} implies, for $z\in \R^l \setminus B(0,\kappa)$, that
\begin{eqnarray}\label{comp9uu+}
 &&\tilde u_k^-(x_\e+\eta(x_\e,t_\e,z),t_\e) -\tilde u_k^-(x_\e,t_\e)-\sum_l\eta_l(x_\e,t_\e,z)\partial_{x_l}\varphi_\e(x_\e,y_\e,t_\e)\notag\\
 &&-\bar u_k^+(y_\e+\eta(y_\e,t_\e,z),t_\e) +\bar u_k^+(y_\e,t_\e)-\sum_l\eta_l(y_\e,t_\e,z)\partial_{y_l}\varphi_\e(x_\e,y_\e,t_\e)\notag\\
 &\leq &  \varphi_\e(x_\e+\eta(x_\e,t_\e,z),y_\e+\eta(y_\e,t_\e,z),t_\e)-\varphi_\e(x_\e,y_\e,t_\e)\notag\\
 &&-\sum_l\eta_l(x_\e,t_\e,z)\partial_{x_l}\varphi_\e(x_\e,y_\e,t_\e)-\sum_l\eta_l(y_\e,t_\e,z)\partial_{y_l}\varphi_\e(x_\e,y_\e,t_\e).
\end{eqnarray}
To ease the notation in the calculations to follow we let $\eta_x^\e(z)=\eta(x_\e,t_\e,z)$, $\eta_y^\e(z)=\eta(y_\e,t_\e,z)$, and we let $\lan \cdot, \cdot \ran$ denote that standard scalar product in $\mathbb R^N$. To proceed we first note here that the left hand side of \eqref{comp9uu+} is precisely the integrand of
\begin{eqnarray}\label{eq:needestpp}
(\mathcal{I}^\kappa(x_\e,t_\e,\tilde u_k^-(\cdot,t_\e),D_x\varphi_\e)-\mathcal{I}^\kappa(y_\e,t_\e,\bar u_k^+(\cdot,t_\e),-D_y\varphi_\e),
\end{eqnarray}
and hence,  using \eqref{comp9uu+}, we see that we want develop appropriate estimates for the function
\begin{eqnarray}\label{eq:needest}
&&\varphi_\e(x_\e+ \eta_x^\e(z), y_\e +\eta_y^\e(z) ,t_\e) - \varphi_\e(x_\e, y_\e ,t_\e)\notag\\
 &&-\langle (\eta_x^\e(z), \eta_y^\e(z)),( D_x \varphi_\e(x_\e, y_\e ,t_\e), D_y \varphi_\e(x_\e, y_\e ,t_\e))\rangle,
\end{eqnarray}
where, as we recall,
\begin{equation*} \label{comp8}
\varphi_\e(x,y,t) = \frac{1}{2\e}|x-y|^{2} + \Lambda  (|x-\bar  {x}|^{2\gamma +2} +|y-\bar  {x}|^{2\gamma +2}) + \beta(t-\bar  {t})^2.
\end{equation*}
A straight forward calculation shows that \eqref{eq:needest} can be simplified to
\begin{eqnarray}\label{eq:bkappa1}
 &&\frac {1}{2 \e} |\eta_x^\e - \eta_y^\e|^2\notag\\
   &&+ \Lambda\left [ A_\e - (2+2\gamma)\left ( \lan \eta_x^\e, (x_\e-\bar x)\ran |x_\e-\bar x|^{2\gamma} + \lan \eta_y^\e, (y_\e-\bar x) \ran |y_\e-\bar x|^{2\gamma} \right )\right ],
\end{eqnarray}
where
$$A_\e:=\left [ |x_\e+\eta_x^\e- \bar x|^{2+2\gamma} - |x_\e- \bar x|^{2+2\gamma} +|y_\e+\eta_y^\e- \bar x|^{2+2\gamma} - |y_\e - \bar x|^{2+2\gamma} \right ].$$
We now first estimate the contribution to the expression in \eqref{eq:needestpp} coming from integration over $z\in B(0,1)\setminus B(0,\kappa)$, i.e., from the corresponding expression involving $\mathcal{I}_-^\kappa$. Given $z\in B(0,1)\setminus B(0,\kappa)$ and using \eqref{assump1elelnloc+}, we first see that
$$\frac{1}{2 \e} |\eta^\e_x - \eta^\e_y|^2 \leq \frac{c}{ \e}|z|^2 |x_\e-y_\e|^2.$$
Concerning the second term of \eqref{eq:bkappa1} we see, using the fundamental theorem of calculus and  \eqref{assump1elelnloc+}, that
\begin{eqnarray}\label{eq:bkappa1bb}
&&\left [ A_\e - (2+2\gamma)\left ( \lan \eta_x^\e, (x_\e-\bar x)\ran |x_\e-\bar x|^{2\gamma} + \lan \eta_y^\e, (y_\e-\bar x) \ran |y_\e-\bar x|^{2\gamma} \right )\right ]\notag\\
&&\leq c|z|^2(1+|x_\e-\bar x|^{2\gamma+2}+|y_\e-\bar x|^{2\gamma+2})
\end{eqnarray}
whenever $z\in B(0,1)\setminus B(0,\kappa)$. In particular, using the above estimates and \eqref{assump1elelnloc} we can conclude that
\begin{eqnarray}\label{eq:estkappa-}
&&| \mathcal{I}_-^\kappa(x_\e,t_\e,\tilde u_k^-(\cdot,t_\e),D_x\varphi_\e)-\mathcal{I}_-^\kappa(y_\e,t_\e,\bar u_k^+(\cdot,t_\e),-D_y\varphi_\e) | \notag \\
&&\leq     c \left (  \frac 1 \e |x_\e-y_\e|^2 + \Lambda (1+|x_\e-\bar x|^{2\gamma+2}+|y_\e-\bar x|^{2\gamma+2})\right ).
\end{eqnarray}
Finally, focusing on the contribution from the term $\mathcal{I}_+^\kappa$ we, through similar calculations as above now using \eqref{assump1elelnloc} on $\R^l \setminus B(0,1)$, we find that \eqref{eq:estkappa-} also holds with $\mathcal{I}_-^\kappa$ replaced by $\mathcal{I}_+^\kappa$ and hence the proof of \eqref{estnloc}, and Theorem \ref{Thm1-}, is complete. \hfill $\Box$

\setcounter{equation}{0} \setcounter{theorem}{0}
\section{Existence: proof of Theorem \ref{Thm1aa} and Theorem \ref{Thm1+}}\label{sec:exuni}
\subsection{Proof of Theorem \ref{Thm1aa}}To prove Theorem \ref{Thm1aa}  we use Perron's method. To construct a viscosity solution to problem \eqref{eq4+} using Perron's method we
define $u=(u_1, \dots, u_d)$ through
\begin{eqnarray*}
u_i\left(x,t\right):=\inf\{ u_i^+\left(x,t\right):\ \mbox{$(u_1^+, \dots, u_d^+)$ is a supersolution of problem \eqref{eq4+}}\}.
\end{eqnarray*}
Note that $u=(u_1, \dots, u_d)$ is well-defined based on the assumption of the existence of a barrier from above.
Let $u^\ast=(u_1^\ast, \dots, u_d^\ast)$ and $u_\ast=(u_{\ast,1}, \dots, u_{d,\ast})$ denote, respectively, the upper and lower semi-continuous envelopes of $u$, i.e.,
\begin{align*}
u^\ast_i\left(x,t\right)&=\limsup_{r\to 0}\{u_i\left(y,s\right):\ \left(y,s\right)\in \left(B\left(x,r\right)\times\left(t-r,t+r\right)\right)\cap (\mathbb R^N\times (0,T])\},\notag\\
u_{i,\ast}\left(x,t\right)&= - \left(- u_i\left(x,t\right)\right)^\ast,
\end{align*}
for $i\in\{1,...,d\}$. Then, by definition and by Theorem \ref{Thm1-} we have
\begin{eqnarray}\label{eq:storreochmindre}
u_i^- \leq  u_i^\ast, \quad u_{i,\ast} \leq u_i^+ \quad \textrm{and}\quad u_{i,\ast} \leq u_i^\ast,
\end{eqnarray}
for all $i\in\{1,...,d\}$,  and where $u^-=(u_1^-, \dots, u_d^-)$ and $u^+=(u_1^+, \dots, u_d^+)$ are sub- and supersolutions to \eqref{eq4+}, respectively.
The essence of Perron's method is now to prove that  $u^\ast$ and $u_\ast$ are, respectively,  a subsolution and a supersolution to problem \eqref{eq4+}. It then follows by the comparison principle that $u^\ast\leq u_\ast$ and hence $u = u_\ast = u^\ast$ is a viscosity solution to the system in \eqref{eq4+}. We first
prove that $u^\ast$ assumes the correct terminal data.  To do this, fix a component $i \in \{1,\dots,d\}$ and a point $y \in \R^N$. By assumption, there exists
a barrier from above  $u^+=u^{+,i,y}$ to the system in \eqref{eq4+} in the sense of Definition \ref{barr1}. In particular, by definition
$u_i(x,t) \leq u_i^{+,i,y,\e}(x,t)$ whenever $(x,t)\in\mathbb R^N\times [0,T]$ and for all $\e >0$. Furthermore, since $u^{+,i,y,\e}$ is continuous we have that $u_i^\ast(x,t) \leq (u_i^{+,i,y,\e})_i^\ast(x,t) = u_i^{+,i,y,\e}(x,t)$ for every $\e >0$ and $(x,t) \in \R^N \times [0,T]$.
In particular, for component $i$ and the point $y$ we have that
$$
u_i^\ast(y,T) \leq \lim _{\e \to 0} u_i^{+,i,y,\e}(y,T)= g_i(y).
$$
Since $i$ and $y$ are  arbitrary in this argument we conclude that $u^\ast$ assumes the correct terminal data. That $u_\ast$ also assumes the correct terminal data is proved similarly using the assumption of barriers from below. Hence it only remains to prove that the remaining conditions for the property of being sub- and supersolutions to the system hold for $u^\ast$ and $u_\ast$, respectively. However, after noticing that our switching costs are assumed to be continuous, see \eqref{assump2}, this follows in the same way, for the subsolution, as outlined on p. 70 in \cite{BJK10}, and for the supersolution, as outlined on p.71-72  in \cite{BJK10}. We omit further details and conclude that the proof of Theorem \ref{Thm1aa} is complete.

\subsection{Proof of Theorem \ref{Thm1+}}
In the light of Theorem \ref{Thm1aa} we see that to prove Theorem \ref{Thm1+} we only need to construct barriers from above and below, in the sense of Definition \ref{barr1}, for each $i \in \{1,\dots,d \}$ and $y\in \R^N$. To construct an appropriate barrier from above, for fixed $i \in \{1,\dots, d\}$ and $y \in \R^N$, we let, for all $j\in\{1,\dots,d\}$,
  \begin{eqnarray}\label{hupa-}
u_j^{+,i,y,\e}(x,t) &=& g(y)+\frac{ K}{\e^2}(T-t)\notag\\
&& + L(e^{\lambda(T-t)}+1)(|x-y|^2 + \e)^{\frac 12} + c_{i,j}(x,t),
\end{eqnarray}
where $K$ and $\lambda$ are non-negative degrees of freedom and $L$ is the Lipschitz-constant of $g(x)$.  We first note that \eqref{assump2el}, and the assumption that $g_i=g$ for all $i \in \{1,\dots, d\}$, implies that $c_{i,j}(x,T) \geq 0$ for all $i,j \in \{1,\dots, d\}$. Hence
$$u^{+,i,y,\e}(x,T) =   g(y)+ 2L(|x-y|^2 + \e)^{\frac 12} + c_{i,j}(x,T) \geq g(x),$$
whenever $x\in\mathbb R^N$ and, in particular,
$$u_i^{+,i,y,\e}(y,T) =   g(y)+ 2L \e^{\frac 12} + c_{i,i}(x,T).$$
Hence, using \eqref{assump3} $(i)$ we see that $u_i^{+,i,y,\e}(y,T) =   g(y)+2 L \e^{\frac 12}$ and $$\lim _{\e \to 0}u_i^{+,i,y,\e}(y,T) =   g(y).$$
We next verify that $u^{+,i,y,\e}$ is actually a supersolution to \eqref{eq4+}.
To do this we start by observing that assumption \eqref{assump3+} implies that $u^{+,i,y,\e}$ is above the obstacle.
In particular,
\begin{eqnarray*}
&&  u _j^{+,i,y,\e} - \max_{k \neq j}( -c_{j,k}(x,t) + u _k^{+,i,y,\e}) \notag \\
&=& - \max_{ k \neq j}( -c_{i,j}(x,t) -c_{j,k}(x,t)  + c_{i,k}(x,t)) \geq 0,
\end{eqnarray*}
where the last inequality is a consequence of \eqref{assump3+}.
Hence, to complete the proof we only have to verify that
\begin{eqnarray}\label{eq:NTSsupersolution}
 -\mathcal{H}u_j^{+,i,y,\e}(x,t)-\psi_j(x,t) &=&  -\mathcal{L}u_j^{+,i,y,\e}(x,t)\notag\\
 && - \mathcal{I}(x,t,u^{+,i,y,\e}_j) -\psi_j(x,t) \geq 0.
\end{eqnarray}
Using assumptions \eqref{assump1el}, \eqref{assump1elel}, and \eqref{assump2+} we deduce by standard calculations that
%
%

\begin{align}
&\left | \sum_{k,l=1}^N a_{k,l}(x,t)\partial_{x_k x_l} u^{+,i,y,\e} +\sum_{k=1}^N a_k(x,t)\partial_{x_k} u^{+,i,y,\e} \right | \notag \\
\leq &  c  (e^{\Lambda (T-t)} +1) (1+ \frac {1}{\e^2} + |x|),
\end{align}
where the constant $c$ is independent from $K$.
Concerning the non-local term $ \mathcal{I}(x,t,u^{+,i,y,\e}_j)$, using Taylor's formula and assumption \eqref{assump1elelnloc+}, we deduce, for $|z| \leq 1$,
\begin{eqnarray*}
&& \biggl |u^{+,i,y,\e}_j(x+\eta(x,t,z),t)-u^{+,i,y,\e}_j(x,t)-\sum_l\eta_l(x,t,z)\partial_{x_l}u^{+,i,y,\e}_j(x,t)\biggr |\notag\\
 &&\leq c|z|^2 \sup_{x' \in \R^N}||\nabla_{x}^2u^{+,i,y,\e}_j(x',t)||,
\end{eqnarray*}
 and, for $|z| > 1$,
\begin{eqnarray*}
&& \biggl |u^{+,i,y,\e}_j(x+\eta(x,t,z),t)-u^{+,i,y,\e}_j(x,t)-\sum_l\eta_l(x,t,z)\partial_{x_l}u^{+,i,y,\e}_j(x,t)\biggr |\notag\\
 &&\leq c \sup_{x' \in \R^N}||\nabla_{x}^2u^{+,i,y,\e}_j(x',t)||.
\end{eqnarray*}
%
It is clear from the construction of $u^{+,i,y,\e}_j$, see \eqref{hupa-}, and \eqref{assump2+} that
$$ \sup_{x' \in \R^N \setminus B(y,1)} ||\nabla_{x}^2u^{+,i,y,\e}_j(x',t)|| \leq c$$
and hence we only need to consider values of $x$ such that $|x - y| < 1$. Straightforward calculations using \eqref{assump1elelnloc+}  show that
\begin{align*}
\sup_{x' \in B(y,1)}||\nabla_{x}^2u^{+,i,y,\e}_j(x',t)|| \leq c (e^{\Lambda (T-t)} +1) \left (\frac{1}{\e^{\frac{3}{2}}} +\frac{1}{\e^{\frac{1}{2}}} \right ) \leq \frac{c}{\e^{2}} (e^{\Lambda (T-t)} +1),
\end{align*}
where, again, the constant $c$ is independent from $K$. Hence, taking assumption \eqref{assump1elelnloc} into account we see that
\begin{align*}
|  - \mathcal{I}(x,t,u^{+,i,y,\e}_j)| \leq \frac{c}{\e^{2}} (e^{\Lambda (T-t)} +1).
\end{align*}
Next, note that
\begin{align*}
\partial_t u_j^{+,i,y,\e} & = -\frac{K}{\e^2} - \Lambda e^{\Lambda(T-t)}(|x-y|^2+\ep)^{\frac 1 2} + \partial_t c_{i,j}.
\end{align*}
Putting the above estimates together and using \eqref{assump2+} we find that
\begin{align}
\mathcal{H}u_j^{+,i,y,\e}(x,t)&-\psi_j(x,t)  \notag \\
\geq& \frac{K}{\e^2} + \Lambda e^{\Lambda(T-t)}(|x-y|^2+\ep)^{\frac 1 2} - c e^{\Lambda (T-t)} (1+ \frac {1}{\e^2} +|x|)  \geq 0
\end{align}
for $K$ and $\Lambda$ large enough. Hence $u^{+,i,y,\e}$ is a supersolution to \eqref{eq4+}.

To construct an appropriate barrier from below, for fixed $i \in \{1,\dots, d\}$ and $y \in \R^N$, we let, for all $j\in\{1,\dots,d\}$,
  \begin{eqnarray}\label{hupa+}
u_j^{-,i,y,\e}(x,t) &=& g(y)- \frac{K}{\e^2}(T-t)\notag\\
&& - L(e^{\lambda(T-t)}+1)(|x-y|^2 + \e)^{\frac 12} - c_{i,j}(x,t),
\end{eqnarray}
where again $K$ and $\lambda$ are non-negative degrees of freedom and $L$ is the Lipschitz-constant of $g(x)$.  The above argument can then be repeated to conclude that $u^{-,i,y,\e}$ is a subsolution to \eqref{eq4+} and hence the proof of Theorem \ref{Thm1+} is complete.  We omit further details.\hfill $\Box$

\begin{remark} Note that a local version of Theorem \ref{Thm1+} is proved as Theorem 2.4 in \cite{LNO12}. In the proof of Theorem 2.4 in \cite{LNO12}
the barriers in \eqref{hupa-} and \eqref{hupa+} are also used. However, there are two typo errors in the statements of these barriers in
\cite{LNO12}. Indeed, the factors $e^{-\lambda t}$ and $K$ stated in the corresponding construction in \cite{LNO12} should be corrected and replaced by $e^{\lambda(T-t)}$ and $\frac{K}{\e^2}$ respectively, as above. The subsequent calculation/argument in \cite{LNO12} should also be modestly adjusted accordingly.
\end{remark}


\begin{thebibliography}{99}



\bibitem[A07]{A07}
A.-L. Amadori
\emph{Obstacle problem for nonlinear integro-differential equations arising in option pricing}
Ricerche di Matematica, {\bf 56}(2007), 1-17.

\bibitem[AF12]{AF12}
B. El-Asri, I. Fakhouri,
\emph{Optimal multi-modes switching with the switching cost not necessarily positive},
arXiv:1204.1683v1, 2012.

\bibitem[AH09]{AH09}
B. El-Asri and S. Hamadene,
\emph{The finite horizon optimal multi-modes switching problem: The viscosity solution approach}
Applied Mathematics \& Optimization, {\bf 60}(2009), 213-235.

\bibitem[BI08]{BI08}
G., Barles, C. Imbert,
\emph{Second-order elliptic integro-differential equations: viscosity solutions\' theory revisited},
Annales de l\'Institut Henri Poincar\'e, {\bf 25} (2008),  567-585.

\bibitem[BJK10]{BJK10}
I. H. Biswas, E. R. Jakobsen, K. H. Karlsen,
\emph{Viscosity solutions for a system of integro-PDEs and connections to optimal switching and control of jump-diffusion processes},
Applied Mathematics \& Optimization, {\bf 62} (2010),  47-80.

\bibitem[DH09]{DH09}
B. Djehiche, S. Hamadene,
\emph{On A Finite Horizon Starting And Stopping Problem With Risk Of Abandonment},
International Journal of Theoretical and Applied Finance, {\bf 12} (2009), 523-543.

\bibitem[DHP10]{DHP10}
B. Djehiche, S. Hamadene, A. Popier,
\emph{A Finite Horizon Optimal Multiple Switching Problem},
SIAM Journal on Control and Optimization, {\bf 48} (2010),  2751-2770


\bibitem[HM12]{HM12}
S. Hamad\'ene, M.A. Morlais
\emph{Viscosity Solutions of Systems of PDEs with Interconnected Obstacles and Multi-Modes Switching Problem},
arXiv:1104.2689v2.

\bibitem[HT07]{HT07}
Y. Hu, S. Tang
\emph{Multi-dimensional BSDE with Oblique Reflection and Optimal Switching},
Probability Theory and Related Fields, {\bf 147} (2010), 89-121.

\bibitem[HZ10]{HZ10}
S. Hamad\'ene, J. Zhang
\emph{Switching Problem and Related System of Reflected Backward SDEs},
Stochastic Processes and their Applications, {\bf 120} (2010), 403-426.

\bibitem[IK91]{IK91}
H. Ishii, S. Koike,
\emph{Viscosity Solutions of a System of Nonlinear Second-Order Elliptic PDEs Arising in Switching Games},
Funkcialaj Ekvacioj, {\bf34} (1991), 143-155.


\bibitem[JK05]{JK05}
E. R. Jakobsen, K. H. Karlsen,
\emph{Continuous Dependence Estimates for Viscosity Solutions of Integro-PDEs},
Journal of Differential Equations, {\bf212} (2005), 278-318.

\bibitem[JK06]{JK06}
E. R. Jakobsen, K. H. Karlsen,
\emph{A Maximum Principle for Semicontinuous Functions Applicable to Integro-Partial Differential Equations},
Nonlinear differential equations and applications, {\bf 13} (2006), 137-165.

\bibitem[LNO12]{LNO12} N. Lundstr\"{o}m, K. Nystr{\"o}m, M. Olofsson, \emph{Systems of variational inequalities in the context of Optimal Switching Problems and Operators of Kolmogorov Type}, to appear in Annali di Mathematica Pura ed Applicata.

\bibitem[P97]{P97} H. Pham, \emph{Optimal stopping, free boundary, and American option pricing in a jump-diffusion model}, 
Applied Mathematics \& Optimization, {\bf 35} (1997), 145-164.

\end{thebibliography}
\end{document}